\renewcommand*{\@opargbegintheorem}[3]{\trivlist
      \item[\hskip \labelsep{\bfseries #1\ #2}] \textbf{(#3)}\ \itshape}
\newtheorem{theorem}{Theorem}[section]
\newtheorem{proposition}[theorem]{Proposition}
\newtheorem{definition}[theorem]{Definition} 
\newtheorem{lemma}[theorem]{Lemma}
\newtheorem{remark}[theorem]{Remark}
\newtheorem{assumption}[theorem]{Assumption}
\newtheorem{proof}{Proof}
\newcommand{\ms}{{{\rm\bf q}}}
\renewcommand{\Pr}{{\mathbb{P}}}
\newcommand{\decision}{{\mathbf{x}}}
\newcommand{\solvemip}{{\texttt{Solve\textsubscript{MIP}}}}
\renewcommand{\natural}{{\mathbb{N}}}
\newcommand{\until}[1]{\{1,\ldots,#1\}}
\newcommand{\EE}{\mathcal{E}} \newcommand{\FF}{\mathcal{F}}
\newcommand{\NN}{\mathcal{N}}
\newcommand{\VV}{\mathcal{V}} 
\newcommand{\bQ}{{\mathbb{Q}}}
\newcommand{\bR}{{\mathbb{R}}}
\newcommand{\bZ}{{\mathbb{Z}}}
\newcommand{\cSet}[1]{{\mathcal{#1}}}
\newcommand{\MC}[1]{{\color{black} #1}}
\newcommand{\GN}[1]{{\color{black} #1}}
\newcommand{\subscript}[2]{$#1 _ #2$}
\newcommand\oprocendsymbol{\hbox{$\square$}}
\newcommand\oprocend{\relax\ifmmode\else\unskip\hfill\fi\oprocendsymbol}
\begin{document}
\begin{frontmatter}

\title{
Randomized Constraints Consensus for\\ Distributed Robust Mixed-Integer Programming\thanksref{footnoteinfo}}

\author[SUTD]{Mohammadreza Chamanbaz}\ead{Chamanbaz@sutd.edu.sg},
\author[unibo]{Giuseppe Notarstefano}\ead{giuseppe.notarstefano@unibo.it},
\author[unisalento]{Francesco Sasso}\ead{francesco.sasso@unisalento.it},
\author[SUTD]{Roland Bouffanais}\ead{bouffanais@sutd.edu.sg}

\address[SUTD]{Singapore University of Technology and Design, 8 Somapah Road Singapore 487372.}
\address[unisalento]{Department of Engineering, Universit\`{a} del
    Salento, Italy.}
\address[unibo]{Department of Electrical, Electronic and
    Information Engineering G. Marconi, University of Bologna, Bologna, Italy.}

\thanks[footnoteinfo]{This work is supported by the European Research Council
  (ERC) under the European Union's Horizon 2020 research and innovation
  programme, grant agreement No 638992 - OPT4SMART, and   a grant from
  the Singapore National Research Foundation (NRF) under the ASPIRE project,
  grant No NCR-NCR001-040. 
  \MC{A preliminary conference version of this paper appeared as \cite{CHAMANBAZ20174973}. This paper addresses a more general optimization set-up, includes a more in-depth analysis and new numerical computations.
  Corresponding author Mohammadreza Chamanbaz, Tel. +6584019212.}}

\begin{abstract}

  In this paper, we consider a network of processors aiming at cooperatively
  solving mixed-integer convex programs subject to uncertainty. Each node only
  knows a common cost function and its local uncertain constraint set. We
  propose a randomized, distributed algorithm working under asynchronous,
  unreliable and directed communication. The algorithm is based on a local
  computation and communication paradigm. At each communication round, nodes
  perform two updates: (i) a verification in which they check---in a randomized
  fashion---the robust feasibility  of \MC{a} candidate
  optimal point, and (ii) an optimization step in which they exchange their
  candidate basis (the minimal set of constraints defining a solution) with
  neighbors and locally solve an optimization problem.
  As main result, we show that processors can stop the algorithm after a finite
  number of communication rounds (either because verification has been
  successful for a sufficient number of rounds or because a given threshold has
  been reached), so that candidate optimal solutions are consensual.  The common
  solution is proven to be---with high confidence---feasible and hence optimal
  for the entire set of uncertainty except a subset having \MC{an} arbitrary small
  probability measure. We show the effectiveness of the proposed distributed
  algorithm using two examples: a random, uncertain mixed-integer linear program
  and a distributed localization in wireless sensor networks. The distributed
  algorithm is implemented on a multi-core platform in which the nodes
  communicate asynchronously.

\end{abstract}

\begin{keyword}
\MC{Distributed Mixed-Integer Optimization, Uncertainty, Sequential Randomized Methods.}
\end{keyword}

\end{frontmatter}

\section{Introduction}\label{sec:introduction}

Robust optimization plays an important role in several areas such as
estimation and control and has been widely investigated. Its rich literature
dates back to the $1950$s, see~\cite{ben2009robust} and references therein. Very
recently, there has been a renewed interest in this topic in a parallel and/or
distributed frameworks.
In the early reference \cite{lee2013distributed}, a synchronous distributed
random projection algorithm with almost sure convergence was proposed for the
case where each node knows a local cost function and an uncertain constraint.
In \cite{lee2016asynchronous}\MC{,} the same authors extended the algorithm to a
gossip-based protocol (based on pairwise node selection). %
To prove almost sure convergence, the algorithms in
\cite{lee2013distributed,lee2016asynchronous} require suitable assumptions on
the random set, network topology and algorithmic weights.  
Authors in \cite{towfic_adaptive_2014} proposed a distributed approach based on
barrier methods. The algorithm needs to start from a strictly feasible point.
In \cite{carlone2014distributed}, a parallel/distributed scheme is considered
for solving an uncertain optimization problem by means of the scenario approach
\cite{calafiore_uncertain_2004,calafiore_scenario_2006}. The scheme consists of
extracting a number of samples from the uncertain set and assigning them to
nodes in a network. Each node is assigned a portion of the extracted
samples. Then, a variant of the constraints consensus algorithm introduced in
\cite{notarstefano2011distributed} is used to solve the deterministic
optimization problem.
A similar parallel framework based on the scenario approach is considered in
\cite{You2018} to solve convex optimization problems with a common uncertain
constraint. In this setup, the sampled optimization problem is solved in a
distributed way by using a primal-dual subgradient (resp. random projection)
algorithm over an undirected (resp. directed) graph.
In \cite{burger2014polyhedral}, a cutting plane consensus algorithm is
introduced for solving convex optimization problems where constraints are
distributed through the network processors and all processors have a common cost
function. In the case where constraints are uncertain, a worst-case approach
based on a pessimizing oracle is used. 
A distributed scheme based on the scenario approach is introduced in
\cite{margellos2016distributed}. Each node initially extracts random samples
from its local uncertain constraint set and a distributed proximal minimization
algorithm is designed to solve the sampled optimization problem.
Xie and coauthors proposed in \cite{You2018} a distributed algorithm for solving
convex programs over time-varying and unbalanced digraphs. To this end,
an epigraph form of the problem is considered to convert the objective function
into a linear form and a two-step recursive algorithm is used which asymptotically
converges to a solution. One of the assumptions in this setup is that each
node needs to know its identifier.
\MC{A distributed algorithm---based on an iterative scheme that combines dual decomposition and proximal minimization---is presented in \cite{FALSONE2017149} which takes into account privacy of information.}

All aforementioned papers deal with usual (continuous) optimization problems.
A centralized method---based on the scenario approach---is presented in
\cite{calafiore2012mixed} for solving robust mixed-integer convex optimization
problems.
Few recent works address the parallel or distributed solution of deterministic
mixed-integer programs.  A decentralized (parallel)---but not
distributed---approach, based on dual decomposition, is proposed in
\cite{VUJANIC2016144,falsone_decentralized_2017} to approximately solve
mixed-integer linear programs (MILPs) with guaranteed suboptimality bounds.  \MC{A central unit is used in the two parallel methods above to update and then broadcast back to agents the dual variables. 
} 
In \cite{falsone2018distributed} a
distributed version of the algorithm in \cite{falsone_decentralized_2017} is proposed. 
\MC{In \cite{camisa2018primal} a distributed algorithm based on primal decomposition is proposed for the same MILP set-up.}
  A Lagrange relaxation approach combined with proximal bundle has been used in
  \cite{kim2013scalable} to solve a demand response problem involving
  mixed-integer constraints.  
    In \cite{FRANCESCHELLI2013156}, the problem of heterogeneous multi-vehicle
  routing is formulated as a MILP and a decentralized algorithm, based on a
  gossip protocol, is proposed to find a feasible \emph{suboptimal} solution
  almost surely. A problem of cooperative distributed robust trajectory
  optimization is addressed in \cite{kuwata_cooperative_2011}. The problem is
  formulated as \MC{a} MILP and solved in a receding horizon framework where
  sequentially each vehicle solves its subproblem---optimizing its own control
  input---and perturbs the decision of other subproblems to improve the global
  cost.
Finally\MC{,} a distributed algorithm based on generation of cutting planes and
exchange of active constraints is proposed in \cite{testa2018distributed} to
solve deterministic MILPs.

The main contribution of this paper\MC{---which extends the conference paper  \cite{CHAMANBAZ20174973}---}is the design of a fully distributed
algorithm to solve uncertain mixed-integer programs in a network with directed,
asynchronous, and possibly unreliable, communication. The problem under
investigation is a mixed-integer program in which the constraint set is the
intersection of local uncertain constraints, each one known only by a single
node.  Starting from a deterministic constraint exchange idea introduced in
\cite{notarstefano2011distributed}, the algorithm proposed in this paper
introduces a randomized, sequential approach in which each node \emph{locally}: (i) 
performs a probabilistic verification step (based on a ``local'' sampling of its
uncertain constraint set), and (ii) solves a  deterministic optimization
problem with a limited number of constraints. If suitable termination conditions
are satisfied, we are able to prove that the nodes reach consensus on a common solution\MC{,} 
which is probabilistically feasible and optimal with high confidence. We also
propose a stopping criterion ensuring that the distributed algorithm \MC{can in fact} 
be stopped after a finite number of communication rounds. Although tailored to
mixed-integer programs, the proposed algorithm can solve a more general class of
problems known as \emph{S-optimization} problems \cite{de_loera_beyond_2015}
(which include continuous, integer and mixed-integer optimization problems).
As compared to the literature reviewed above, the proposed algorithm has three main
advantages. First, no assumptions are needed on the probabilistic nature of the
local constraint sets. Second, each node can sample locally its own uncertain
set. Thus, no central unit is needed to extract samples and no common constraint
set needs to be known by the nodes. Third and final, nodes do not need to
perform the whole sampling at the beginning and subsequently solve the
(deterministic) optimization problem. Online extracted samples are used only for
verification, which is computationally inexpensive. The optimization is
always performed on a number of constraints that remains constant for each node
and depends only on the dimension of the decision variable and on the number of
neighboring nodes. Furthermore, the distributed algorithm can be immediately
implemented using existing off-the-shelf optimization tools, and no ad-hoc
implementation of specific update rules is needed.

The paper is organized as follows. In
Section~\ref{sec:preliminaries_and_problem_setup}, we provide some preliminaries
and we formulate the distributed robust mixed-integer convex program.
Section~\ref{sec: distributed algorithm} presents our distributed, randomized
algorithm for finding a solution---with probabilistic robustness---to robust
distributed mixed-integer problems. The probabilistic convergence properties of
the distributed algorithm are investigated in Section~\ref{sec: analysis of the
  algorithm}. Finally, extensive numerical simulations are performed in
Section~\ref{sec: numerical simulations} to show the effectiveness of the
proposed methodology.

\section{Preliminaries \& Problem Set-Up}\label{sec:preliminaries_and_problem_setup}
In this section\MC{,} we introduce some preliminary notions of combinatorial
optimization that will serve us to set-up the Robust Mixed-Integer Convex
Program addressed in the paper and the methods used for the proposed algorithm.

\subsection{Preliminaries}
Given a constraint set
$\cSet{F} = \cSet{F}^1\cap\dots\cap\cSet{F}^N\subset\mathbb{R}^d$  and a vector
$c\in\mathbb{R}^d$, we will denote with $(\cSet{F},c)$ the following
optimization problem over $S\subset\mathbb{R}^d$
\begin{align}\nonumber
  \min\,\,& c^T\decision\\ \nonumber
  \text{subject to }& \decision \in \cSet{F} ,\\ \nonumber 
  & \decision\in S.
\end{align}
Moreover, let us denote  $J(\cSet{F})$ the optimal cost of the above problem.
For a large class of problems, \GN{often known as abstract programs or LP-type
problems}, a solution to $(\cSet{F},c)$ can be identified by considering a subset
of the constraints $\cSet{F}^1,\dots,\cSet{F}^N$. This concept is characterized
by the notion of \emph{basis}, which is, informally, a ``minimal'' set of
constraints that defines a solution.
The concept of basis is supported by Helly-type theorems, initially introduced
by Eduard Helly in \cite{helly1923mengen}, see also
\cite{amenta_helly-type_1994}. Before formally defining a basis, we define the
\emph{Helly number} and its variation \emph{$S$-Helly
  number}~\cite{amenta_hellys_2015,de_loera_beyond_2015}.

\begin{definition}[Helly number]
  Given a nonempty family $\cSet{K}$ of sets, Helly number
  $h = h(\cSet{K})\in\mathbb{N}$ of $\cSet{K}$ is defined as the smallest number
  satisfying the following
    \begin{align*}
      \forall i_1,\dots,i_h\in&\{1,\dots,m\} : K_{i_1}\cap\dots\cap K_{i_h} \neq \emptyset \Rightarrow \\
      &K_{1}\cap\dots\cap K_{m} \neq \emptyset
    \end{align*}
  for all $m\in\mathbb{N}$ and $K_1,\dots,K_m\in\cSet{K}$. If no such $h$ exists then $h(\cSet{K}) = \infty$.
\end{definition}
\GN{For the classical Helly's theorem, $\cSet{K}$ is a finite family of convex subsets
of $\mathbb{R}^d$.} One of the important extensions of the Helly number is the
\emph{$S$-Helly number} defined for $S\subset \mathbb{R}^d$.\oprocend

\begin{definition}[$S$-Helly number]
 \GN{ Given a set $S\subset\mathbb{R}^d$, let $S\cap\cSet{K}$ be the family of sets
  $S\cap K$ such that $K\subset\mathbb{R}^d$ is convex. The $S$-Helly number is defined as
  $h(S) = h(S\cap\cSet{K})$.} \oprocend
 \end{definition}

It is worth pointing out the main difference between the definitions given
above. The Helly number states that if the intersection of every $h$ sets is
nonempty, then there is a point in common between all the sets. Differently, the
$S$-Helly number states that if the intersection of any $h$ convex sets contains
a point in $S$, then all the sets have a point in common in $S$.

\begin{definition}[Basis]
  \GN{Given a constraint set $\cSet{F} = \cSet{F}^1\cap\cdots\cap\cSet{F}^N$, a
  basis $\cSet{B}$ of $(\cSet{F}, c)$ is a minimal collection of constraints
  from $\cSet{F}^1,\dots,\cSet{F}^N$, such that the optimal cost of the problem
  $(\cSet{F}, c)$ is identical to the one of $(\cSet{B}, c)$, i.e.,
  $J(\cSet{F}) = J(\cSet{B})$.\oprocend }
\end{definition}

\GN{We point out that in \MC{this} paper we are slightly abusing \MC{the} notation since we are
  denoting by $\cSet{B}$ both the collection of constraints (when referring to
  the basis) and their intersection (when denoting the constraint set of the
  optimization problem).}

The size of a largest basis of problem $(\cSet{F}, c)$ is called its
\emph{combinatorial dimension}. The following result connects the combinatorial
dimension with the $S$-Helly number. This result is presented in \cite[Theorem
2]{calafiore2012mixed}, \cite[Theorem 1.3]{de_loera_beyond_2015} and
\cite[Theorem 3.11]{amenta_hellys_2015}.

\begin{theorem}
Let $\cSet{F} = \cSet{F}^1\cap\!\cdots\!\cap\cSet{F}^N$, then the combinatorial
dimension of problem $(\cSet{F}, c)$ is $h(S)-1$.\oprocend
\end{theorem}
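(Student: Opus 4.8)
The plan is to establish the equality by proving two matching inequalities that relate the size of any basis of $(\cSet{F},c)$ to the $S$-Helly number $h(S)$. The substantive direction is the upper bound: every basis $\cSet{B}$ of $(\cSet{F},c)$ contains at most $h(S)-1$ constraints. The reverse, tightness direction exhibits an instance whose largest basis attains the value $h(S)-1$. Throughout I would use that each $\cSet{F}^i$ is convex and that the (open) sublevel sets of the linear objective are convex, so that the defining property of the $S$-Helly number can be applied to the augmented family consisting of the constraints together with one such sublevel set.

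For the upper bound, let $\cSet{B} = \{\cSet{F}^{i_1},\dots,\cSet{F}^{i_k}\}$ be a basis and set $\gamma = J(\cSet{B}) = J(\cSet{F})$. First I would introduce the convex open sublevel set $U = \{x\in\mathbb{R}^d : c^T x < \gamma\}$. Since $\gamma$ is the optimal cost of $(\cSet{B},c)$ over $S$, no point of $S$ lies in $\cSet{F}^{i_1}\cap\cdots\cap\cSet{F}^{i_k}\cap U$; hence the family of $k+1$ convex sets $\{\cSet{F}^{i_1},\dots,\cSet{F}^{i_k},U\}$ has empty intersection with $S$. On the other hand, minimality of the basis means that deleting any $\cSet{F}^{i_j}$ strictly lowers the optimal cost, so there is a point of $S$ feasible for the remaining constraints with objective value below $\gamma$, i.e.\ lying in $U$; and the subfamily obtained by deleting $U$ shares any feasible point of $\cSet{B}$ in $S$ (which exists since the instance is feasible). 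Thus every $k$-element subfamily of the $k+1$ sets admits a common point in $S$. If we had $k \ge h(S)$, then every $h(S)$-element subfamily---being contained in some $k$-element subfamily---would also admit a common point in $S$, and the defining property of $h(S)$ would force a common point in $S$ for all $k+1$ sets, a contradiction. Hence $k \le h(S)-1$.

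For the matching lower bound I would invoke the minimality built into the definition of $h(S)$: since $h(S)-1$ fails the $S$-Helly property, there exist convex sets $K_1,\dots,K_m$ such that every $h(S)-1$ of the sets $S\cap K_j$ intersect but the whole family has empty intersection with $S$. From such a configuration I would engineer constraints $\cSet{F}^i$ and a cost $c$ so that the optimum is supported by exactly $h(S)-1$ of them with none redundant, yielding a basis of size $h(S)-1$; together with the upper bound this shows the largest basis has size exactly $h(S)-1$.

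The main obstacle I anticipate is not the Helly bookkeeping but the degeneracy and attainment issues: ensuring $\gamma$ is well defined and that ``minimal collection with $J(\cSet{B})=J(\cSet{F})$'' translates cleanly into the existence of a feasible $S$-point strictly below $\gamma$ after removing a constraint (in particular handling the case where a reduced optimum is an unattained infimum, and ruling out unbounded or infeasible instances). The tightness construction is likewise delicate, since it must realize the extremal Helly configuration as a genuine $S$-optimization instance; here I would lean on the explicit constructions in the cited references \cite{calafiore2012mixed,de_loera_beyond_2015,amenta_hellys_2015}.
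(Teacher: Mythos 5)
First, a point of comparison: the paper does not prove this theorem at all; it is stated with attribution to \cite{calafiore2012mixed}, \cite{de_loera_beyond_2015} and \cite{amenta_hellys_2015}, so your proposal can only be measured against the standard arguments in those references. Your upper-bound argument is correct and is essentially that standard argument. For a \emph{feasible} instance with finite optimal value $\gamma=J(\cSet{B})$, adjoining the open convex set $U=\{x\in\bR^d : c^T x<\gamma\}$ to a basis $\cSet{B}=\{\cSet{F}^{i_1},\dots,\cSet{F}^{i_k}\}$ gives $k+1$ convex sets with no common point in $S$, while every $k$ of them do have one: dropping $U$ uses feasibility, and dropping $\cSet{F}^{i_j}$ uses minimality, since $J(\cSet{B}\setminus\{\cSet{F}^{i_j}\})<\gamma$ produces a feasible $S$-point in $U$ whether or not that infimum is attained (or is $-\infty$). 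The counting step is also sound: if $k\ge h(S)$, every $h(S)$-element subfamily sits inside some $k$-element subfamily, so the defining property of $h(S)$ would manufacture an $S$-point common to all $k+1$ sets, i.e., a feasible point of $(\cSet{B},c)$ with cost below $\gamma$, a contradiction. This is the half of the theorem the paper actually uses: it is what bounds basis sizes, hence the message size in the algorithm and the value $(d_R+1)2^{d_Z}-1$.

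However, two items you list as ``anticipated obstacles'' are genuine gaps rather than bookkeeping. (i) Feasibility must be a hypothesis, not a case to be handled: for an infeasible instance ($J(\cSet{F})=+\infty$) a basis is an inclusion-minimal subfamily whose intersection misses $S$, and such families can have size $h(S)$, not $h(S)-1$; indeed $h(S)$ equals the maximum size of an inclusion-minimal family of convex sets with empty intersection in $S$ (if such a family had more than $h(S)$ members, every $h(S)$-subfamily would be proper, hence would meet $S$, and the Helly property would force the whole family to meet $S$). Already for $S=\bR$, where $h(S)=2$, the two constraints $x\le 0$ and $x\ge 1$ form a basis of size $2>h(S)-1$. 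The unbounded case is, by contrast, harmless: if $J(\cSet{F})=-\infty$ then every subcollection also has value $-\infty$ and the basis is empty. (ii) The tightness direction is not actually proved, and what is missing is exactly the hard part. From a critical family $G_1,\dots,G_h$ (every proper subfamily meets $S$, the whole family does not) you need a \emph{linear} objective to impersonate the deleted set $G_h$: writing $x_j$ for an $S$-point of $\bigcap_{l\neq j}G_l$ (so $x_j\in G_h$ and $x_j\notin G_j$), you need $c$ with $c^T x_j<\inf\{c^T x : x\in S\cap G_1\cap\dots\cap G_{h-1}\}$ for every $j$. Because $S$ and these partial intersections are not convex, there need not exist any hyperplane separating $G_h$ from $S\cap G_1\cap\dots\cap G_{h-1}$, so an arbitrary convex set cannot simply be traded for a sublevel set; bridging this is the actual substance of the cited results, and ``leaning on'' \cite{de_loera_beyond_2015} and \cite{amenta_hellys_2015} leaves that half of the theorem unproven. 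A final reading point: exact equality cannot hold instance-by-instance (an instance with a single constraint has bases of size at most one), so the statement must be read as a supremum over the problem class---your plan does this implicitly, but it should be said explicitly.
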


  It is worth noticing that if in problem $(\cSet{F}, c)$, the sets
  $\cSet{F}^1,\dots,\cSet{F}^N$ are convex, well known classes of problems arise
  depending on $S$.  If $S=\bR^d$, problem $(\cSet{F}, c)$ is a
  continuous convex optimization problem. If $S=\bZ^d$, then
  $(\cSet{F}, c)$ becomes an integer optimization problem. Choosing
  $S=\bZ^{d_Z}\times\bR^{d_R}$ with $d=d_Z+d_R$ leads to a mixed-integer convex
  problem. The S-Helly number for $S=\bZ^{d_Z}\times\bR^{d_R}$ is given below, see
  \cite{averkov2012transversal}.
  \begin{theorem}[Mixed-Integer Helly Theorem]\label{thm:S-helly dimension}
    The Helly number $h(\bZ^{d_Z}\times\bR^{d_R})$ equals $(d_R+1)2^{d_Z}$.\oprocend
  \end{theorem}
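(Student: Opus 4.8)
The plan is to recast the claim in terms of minimal infeasible families and then prove a matching lower and upper bound, each of which specializes to a classical theorem at the extremes. Writing $S=\bZ^{d_Z}\times\bR^{d_R}$ and $d=d_Z+d_R$, the inequality $h(S)\le H$ is equivalent to the statement that every finite family of convex sets $K_1,\dots,K_m\subset\bR^{d}$ with $\bigcap_i(S\cap K_i)=\emptyset$ admits a subfamily of at most $H$ sets with empty $S$-intersection; equivalently, every \emph{minimal} $S$-infeasible family (one in which deleting any single set restores a common point of $S$) has at most $H$ members. Dually, $h(S)\ge H$ is certified by exhibiting one minimal $S$-infeasible family of exactly $H$ sets. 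I would first record the two limiting cases as guides: for $d_Z=0$ the target reduces to the classical Helly number $h(\bR^{d_R})=d_R+1$, and for $d_R=0$ it reduces to the Doignon--Bell--Scarf theorem $h(\bZ^{d_Z})=2^{d_Z}$. The whole argument is then an interleaving of these two classical results, and one may assume throughout (by a standard reduction, passing to supporting half-spaces and using compactness to restrict to finite families) that the convex sets are closed half-spaces, so the objects in play are polyhedra.

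\textbf{Lower bound.} To show $h(S)\ge(d_R+1)2^{d_Z}$ I would build a bounded polyhedron $P\subset(0,1)^{d_Z}\times\bR^{d_R}$ cut out by exactly $(d_R+1)2^{d_Z}$ half-spaces. Since the projection of $P$ onto the integer coordinates lies in the open cube $(0,1)^{d_Z}$, it contains no point of $\bZ^{d_Z}$, whence $P\cap S=\emptyset$. The half-spaces are organized into $2^{d_Z}$ groups, one for each lattice corner $v\in\{0,1\}^{d_Z}$; the $d_R+1$ half-spaces of group $v$ are arranged as the facets of a simplicial cone that ``caps'' the approach of $P$ to the flat $\{v\}\times\bR^{d_R}$, so that deleting any single facet of group $v$ lets the enlarged region reach $\{v\}\times\bR^{d_R}$ at a point of $S$. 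The smallest instance, $d_Z=d_R=1$, is already instructive: a suitable rhombus squeezed inside the strip $0<x<1$ has four edges, and removing any one edge opens the region until it meets one of the lines $x\in\{0,1\}$ at an integer-abscissa point. This construction simultaneously generalizes the Doignon lattice-free box (the integer directions) and the simplex of classical Helly (the continuous directions), and its minimality gives the bound.

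\textbf{Upper bound.} This is the hard direction. I would prove it by induction on $d_R$, the base case $d_R=0$ being exactly the Doignon--Bell--Scarf bound $h(\bZ^{d_Z})\le2^{d_Z}$. Given a minimal $S$-infeasible family of polyhedra with witnesses $w_i=(z_i,y_i)\in S$, the inductive step projects out one continuous coordinate and shows that the Helly number grows by at most $2^{d_Z}$ when the real dimension increases by one, i.e. $h(\bZ^{d_Z}\times\bR^{d_R})\le h(\bZ^{d_Z}\times\bR^{d_R-1})+2^{d_Z}$, which telescopes from the base case to the claimed value. The mechanism is to apply classical continuous Helly along the eliminated real direction within each fiber, while the surplus $2^{d_Z}$ comes from the Doignon parity argument on the residues $z_i\bmod 2\in(\bZ/2)^{d_Z}$: two witnesses in the same residue class have an integral midpoint, and convexity propagates this midpoint into every set except the two that exclude the respective witnesses, which is the bookkeeping that converts parity collisions into the factor $2^{d_Z}$.

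The main obstacle I anticipate is making this interleaving precise. The naive midpoint argument on same-parity witnesses stalls because the midpoint need not lie in the two sets that the colliding witnesses respectively violate (each witness is, by construction, outside exactly one set), so the parity step cannot be used in isolation; it must be coupled with the fiberwise continuous-Helly reduction and an induction that tracks which constraints remain active after projecting out a real coordinate. Equally delicate is the reduction to half-spaces together with the finiteness reduction for general (possibly unbounded or non-closed) convex sets, which must be justified so that the clean polyhedral argument applies without changing the Helly number. Once the inequality $h(\bZ^{d_Z}\times\bR^{d_R})\le h(\bZ^{d_Z}\times\bR^{d_R-1})+2^{d_Z}$ is established, the induction together with the Doignon base case and the lower-bound construction yields $h(\bZ^{d_Z}\times\bR^{d_R})=(d_R+1)2^{d_Z}$, as claimed.
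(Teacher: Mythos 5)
First, a point of context: the paper does not prove this theorem at all --- it is quoted as a known result with a citation to Averkov and Weismantel \cite{averkov2012transversal} (the result goes back to Hoffman), so your attempt has to stand on its own, and on its own it is an outline whose decisive step is missing. Your framing is correct (bounding the size of minimal $S$-infeasible families, lower bound by an extremal family, upper bound as the hard direction), and the arithmetic of your telescoping is consistent. But the entire upper bound rests on the recursion $h(\bZ^{d_Z}\times\bR^{d_R})\le h(\bZ^{d_Z}\times\bR^{d_R-1})+2^{d_Z}$, for which no proof is given, and the mechanism you sketch for it does not work as stated. Projecting out a real coordinate does not preserve $S$-infeasibility: a common point $(z,y)$ of the projections with $z$ integral only guarantees that each $K_i$ meets the fiber $\{(z,y)\}\times\bR$ in some nonempty interval, and these intervals can be pairwise disjoint across different $i$; so the projected family need not satisfy the inductive hypothesis, and converting this fiberwise obstruction into a bound on the number of sets is precisely the combinatorial content that is absent. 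Likewise, the parity step stalls exactly where you admit it does: the integral midpoint of two same-parity witnesses is only known to lie in the sets violated by neither witness, and already in the pure integer case $d_R=0$ closing this hole is the heart of the Doignon--Bell--Scarf proof (it needs an extremal choice of the witness configuration and a replacement/descent argument, not just pigeonhole). Since your induction takes $d_R=0$ as its base case and the same unproved mechanism is supposed to drive the inductive step, the proposal as written establishes neither the base nor the step. Saying the two ideas ``must be coupled'' names the gap; it does not fill it.

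Two secondary issues. The ``standard reduction'' to half-spaces is stated in the wrong direction: replacing each $K_i$ by a supporting or separating half-space \emph{enlarges} the sets, and enlarging can only add points to the intersection, so $S$-infeasibility is destroyed rather than preserved. The valid standard reduction shrinks each $K_i$ to $\mathrm{conv}\{w_j : j\ne i\}$, which yields polytopes, not half-spaces; getting from there to half-space families requires an additional extraction argument. Finally, the lower bound asserts but never verifies minimality of the proposed $(d_R+1)2^{d_Z}$ half-spaces, and this verification is not routine: for $d_R=0$ the natural ``cap every corner'' candidate $\sum_i |x_i-\tfrac12|\le\tfrac12$ is a minimal infeasible family only for $d_Z\le 3$ (for $d_Z\ge 4$ removing one facet leaves a region that still contains no lattice point), whereas a correct half-space example is the large cross-polytope $\sum_i \sigma_i(x_i-\tfrac12)\le \tfrac{d_Z}{2}-\epsilon$, $\sigma\in\{\pm1\}^{d_Z}$; the geometry matters, and the mixed-integer extension needs the same care. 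If you want a complete proof, the argument to follow is the one in the cited literature rather than this coordinate-elimination induction.
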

  This implies that the combinatorial dimension of $(\cSet{F}, c)$ with
  $S=\bZ^{d_Z}\times\bR^{d_R}$ is $(d_R+1)2^{d_Z} - 1$.

\subsection{Problem Set-Up}
We consider a network of $n$ processors with limited computation and/or
communication capabilities aiming at cooperatively solving the \MC{following} Robust
Mixed-Integer Convex Program (RMICP)  
\begin{align}\nonumber
  \min\,\,& c^T\decision \nonumber\\
  \text{subject to }& \decision\in\bigcap_{i=1}^n \FF^i(q),\,\,\forall q\in\bQ, \label{eq:RMICP}\\
  & \decision\in S,\nonumber
  \end{align}
  where $\decision\in S$ is the vector of decision variables, $q\in\bQ$ is
  the vector of uncertain parameters acting on the system, while
  $\FF^i(q)=\{\decision\in S:f^i(\decision,q)\leq0\}$ is the constraint set known only by agent $i$, with
  $f^i(\decision,q): \mathbb{R}^d\times\bQ\rightarrow\bR$
  its related constraint function, which is assumed to be convex for any fixed value of $q\in\mathbb{Q}$. 
  The objective function is considered to be linear. This assumption is without
  loss of generality. In fact, a nonlinear convex objective function can be
  transformed into the epigraph form by introducing an extra decision variable.

  We make the following assumption regarding the solution of any deterministic
  subproblem of~\eqref{eq:RMICP}, i.e., a problem in which only a finite number
  of constraints $\cSet{F}^i(q)$ (for a given $q$) are considered.
  \begin{assumption}[Non-degeneracy]~\label{assum: nondegeneracy} The minimum
    point of any subproblem of~\eqref{eq:RMICP} with at least $h(S)-1$
    constraints is unique and there exist only $h(S)-1$ constraints intersecting at the
    minimum point.\oprocend
  \end{assumption}
  Assumption~\ref{assum: nondegeneracy} is not restrictive. In fact, to ensure
  uniqueness of the optimal point we could use a strictly convex objective
  function, a lexicographic ordering, or any universal tie-breaking rule, see
  \cite[Observation 8.1]{amenta_helly-type_1994} for further details.
  
  Processor $i$ has only knowledge of the constraint set $\cSet{F}^i(q)$ and of
  the objective direction $c$, which is common among all  nodes. The goal is
  to design a fully distributed algorithm---consisting of purely local computation and
  communication with neighbors---used by each agent in order to cooperatively
  solve problem \eqref{eq:RMICP}. We want to stress that there is no central
  node having access to all constraints.

  We let the nodes communicate according to a time-dependent, directed
  communication graph $\mathcal{G}(t)=\{\mathcal{V},\mathcal{E}(t)\}$ where
  $t\in\natural$ is a universal time which does not need to be known by nodes,
  $\VV=\until{n}$ is the set of agent identifiers and $(i,j)\in\EE(t)$ indicates
  that $i$ sends information to $j$ at time $t$. The time-varying set of incoming
  (\MC{resp.} outgoing) neighbors of node $i$ at time $t$, $\mathcal{N}_\text{in}(i,t)$
  ($\mathcal{N}_\text{out}(i,t)$), is defined as the set of nodes from (\MC{resp.} to)
  which agent $i$ receives (\MC{resp.} transmits) information at time $t$.  A directed
  static graph is said to be \emph{strongly connected} if there exists a
  directed path (of consecutive edges) between any pair of nodes in the
  graph. For time-varying graphs we use the notion of \emph{uniform joint strong
    connectivity} formally defined next.

\begin{assumption}[Uniform Joint Strong Connectivity]~\label{assum:graph}
There exists an integer $L\ge1$ such that the graph
$\bigg(\mathcal{V}, \bigcup_{\tau=t}^{t+L-1} \mathcal{E}(\tau)\bigg)$ is strongly
  connected for all $t\ge0$. \oprocend
\end{assumption}
\vspace{1ex}

There is no assumption on how uncertainty $q$ enters problem \eqref{eq:RMICP},
thus making its solution particularly challenging. In fact, if the uncertainty
set $\mathbb{Q}$ is an uncountable set, problem \eqref{eq:RMICP} is a
semi-infinite optimization problem involving an infinite number of constraints.  In
general, there are two main paradigms to solve an uncertain optimization problem
of the form \eqref{eq:RMICP}. The first approach is a deterministic worst-case
paradigm in which the constraints are enforced to hold for \emph{all} possible
uncertain parameters in the set $\mathbb{Q}$. This approach is computationally
intractable for cases where uncertainty does not appear in a ``simple'' form,
e.g. affine, multi-affine, or convex. Moreover, since some uncertainty scenarios
are very unlikely to happen, the deterministic paradigm may be overly
conservative.  The second approach---\MC{the one} pursued in this paper---is a
probabilistic approach where uncertain parameters are considered to be random
variables and the constraints are enforced to hold for the entire set of
uncertainty except a subset having an arbitrary small probability measure.

\section{Randomized Constraints Consensus}\label{sec: distributed algorithm}
In this section, we present a distributed, randomized algorithm for solving
problem \eqref{eq:RMICP} in a probabilistic sense.
Note that, since the \MC{uncertain} constraint sets in \eqref{eq:RMICP} are
uncountable, it is in general very difficult to verify if a candidate solution
is feasible for the entire set of uncertainty or not.
We instead use a randomized approach based on Monte Carlo simulation\MC{s} to check
probabilistic feasibility. 

\subsection{Algorithm description}
\label{subsec:algorithm_description}

The distributed algorithm we propose has a probabilistic nature and consists of
two main steps: verification and optimization. The main idea is the following. A
node has a candidate basis and candidate solution point. First, it verifies if
the candidate solution point belongs to its local uncertain set with high
probability. Then, it collects bases from neighbors and solves a convex
mixed-integer problem with its basis and its neighbors' bases as constraint
set. If the verification step is not successful, the first violating constraint
is also considered in the optimization.

Formally, we assume that $q$ is a random variable and a probability measure
$\Pr$ over the Borel $\sigma-$algebra of $\mathbb{Q}$ is given.
We denote by $k_i$ a local counter keeping track of the number of times the
verification step is performed by agent $i$. 

In the verification step, each agent $i$ generates a \GN{multi-sample $\ms_{k_i}^{i}$} with cardinality
$M_{k_i}^{i}$ from the set of uncertainty
\[
\ms_{k_i}^{i} \doteq \{q_{k_i,i}^{(1)},\ldots,q_{k_i,i}^{(M_{k_i}^{i})}\} \in\mathbb{Q}^{M_{k_i}^{i}},
\] 
according to the measure $\Pr$, where
$\mathbb{Q}^{M_{k_i}^{i}}\doteq
\mathbb{Q}\times\mathbb{Q}\times\cdots\times\mathbb{Q}$ ($M_{k_i}^{i}$ times).
Node $i$ checks feasibility of the candidate
solution $\decision^i(t)$ only at the extracted samples (by simply checking the
sign of $f^i(\decision^i(t), q_{k_i,i}^{(\ell)})$ for each extracted sample
$q_{k_i,i}^{(\ell)}$, $\ell\in \{1,\ldots,M_{k_i}^{i}\}$) . If a violation happens, the first
violating sample is used as a \emph{violation certificate}\footnote{In fact,
  more than one violated sample---if they exist---can be returned by the
  verification step. See Remark~\ref{rem:violation Certificate} for more
  details.  }.

In the optimization step, agent $i$ transmits its current basis to all outgoing
neighbors and receives bases from incoming ones. Then, it solves a convex
mixed-integer problem whose constraint set is composed of: \emph{i)} a
constraint generated at the violation certificate (if it exists), \emph{ii)}
its current basis, and \emph{iii)} the collection of bases from all incoming
neighbors.
We define a primitive $[\decision^*,\cSet{B}]=\solvemip(\cSet{F},c)$ which
solves the \emph{deterministic} mixed-integer convex problem defined by the pair
$(\cSet{F},c)$ and returns back the optimal point $\decision^*$ and the
corresponding basis $\cSet{B}$.
Node $i$ repeats these two steps until a termination condition is satisfied,
namely if the candidate basis has not changed for $2nL+1$ times, with $L$
defined in Assumption~\ref{assum:graph}.
The distributed algorithm is formally presented in Algorithm~\ref{alg:
  distributed algorithm}.

\begin{algorithm}[h]
\caption{Randomized Constraints Consensus}
\label{alg: distributed algorithm}
\textbf{Input:}{ \GN{$\cSet{F}^i(q)$, $c$, $\varepsilon_i$, $\delta_i$} }

\textbf{Output:}{ 
$\decision_\text{sol}^i$
}

\textbf{Initialization:\\}
Set $k_i=1$, $[\decision^i(1),\cSet{B}^i(1)] = \solvemip(\cSet{F}^i(q^i_0),c)$
\GN{for some $q^i_0\MC{\in \bQ}$}

\textbf{Evolution:}\\
\textbf{Verification:}
\begin{enumerate}[label=\subscript{V}{{\arabic*}}:]
\item
If $\decision^i(t)=\decision^i(t-1)$, set 
$q^{\texttt{viol}}_{t,i}=\emptyset$ and goto {\bf Optimization}
\item 
Extract 
\begin{equation}\label{eq:sample bound Mk}
M_{k_i}^{i}=\Big\lceil \frac{2.3+1.1\ln k_i+\ln \frac{1}{\delta_i}}{\ln \frac{1}{1-\varepsilon_i}}\Big\rceil
\end{equation}
i.i.d samples
$\ms_{k_i}^i =
\{q_{k_i,i}^{(1)},\ldots,q_{k_i,i}^{(M_{k_i}^{i})}\}$
\item
If $\decision^i(t)\in \FF^i(q_{k_i,i}^{(\ell)})$ for all
$\ell=1,\ldots,M_{k_i}^{i}$, set 
$q^{\texttt{viol}}_{t,i}=\emptyset$; else, set
$q^{\texttt{viol}}_{t,i}$ as the first sample for which 
$\decision^i(t)\notin \FF^i(q^{\texttt{viol}}_{t,i})$
\item
Set $k_i=k_i+1$
\end{enumerate}

\textbf{Optimization:} 
\begin{enumerate}[label=\subscript{O}{{\arabic*}}:]
\item 
Transmit $\cSet{B}^i(t)$ to $j\in\mathcal{N}_\text{out}(i,t)$, acquire bases
from incoming neighbors and set $\cSet{Y}^i(t)\doteq\cap_{j\in\mathcal{N}_\text{in}(i,t)}\cSet{B}^j(t)$
\item
$[\decision^i(t+1),\cSet{B}^i(t+1)]=\solvemip(\cSet{F}^i(q^{\texttt{viol}}_{t,i})\cap \cSet{B}^i(t)\cap \cSet{Y}^i(t),c)$
\item
If  $\decision^i(t+1)$ unchanged for $2nL+1$ times, 
 return  
 $\decision_\text{sol}^i=\decision^i(t+1)$
\end{enumerate}
\end{algorithm}

At this point, it is worth highlighting some key interesting features of the
proposed algorithm.
First, the verification step, even if it may run possibly a large number of
times, consists of simple, inexpensive inequality checks which are not
computationally demanding.
Moreover, we remark that if at some $t$ the candidate solution has not changed, that is
$\decision^i(t)=\decision^i(t-1)$, then $\decision^i(t)$ has successfully
satisfied a verification step and, thus, the algorithm does not perform it
again.
Second, the number of constraints involved in the optimization problem being
solved locally at each node is fixed. Hence, the complexity of the problem does
not change with time.
Third, the amount of data a processor needs to transmit does not depend on the
number of agents, but only on the dimension of the space. Indeed, processor $i$
transmits only $h(S)-1$ constraints at each iteration and for mixed-integer
convex programs this number only depends on the dimension of the space, see
Theorem~\ref{thm:S-helly dimension}.
Fourth and final, the proposed distributed randomized algorithm is completely
asynchronous and works under unreliable communication. Indeed, $t$ is a
universal time that does not need to be known by the processors and the graph
can be time-varying.
Thus, if nodes run the computation at different speeds, this can be modeled by
having no incoming and outgoing edges in that time interval. Similarly, if
transmission fails at a given iteration, this is equivalent to assum\MC{ing} that the
associated edge is not present in the graph at that iteration.

\begin{remark}[Local optimization and initial constraint]\label{rem:original constraints set}
  In the deterministic constraints consensus algorithm presented in
  \cite{notarstefano2011distributed}, at each iteration of the algorithm, each
  node needs to include in the local optimization problem its original
  constraint set. Here, we can drop this requirement because of the uncertain
  nature of the problem handled in the verification step. \oprocend
\end{remark}

\begin{remark}[Multiple Violation Certificates]\label{rem:violation Certificate}
  In the verification step of Algorithm~\ref{alg: distributed algorithm}, we set
  $q^{\textnormal{\texttt{viol}}}_{t,i}$ as the first sample for which
  $\decision^i(t)\notin \FF^i(q^{\textnormal{\texttt{viol}}}_{t,i})$. However, if more than
  one violated sample is found, then, in step $O_2$ of Algorithm~\ref{alg:
    distributed algorithm}, we may include $r>1$ violated constraints in the
  \textnormal{\solvemip}~primitive, thus solving a larger optimization problem. By doing
  this, one expects a faster convergence and hence less communication burden. In
  fact, as we will show in the numerical computations, the number of violated
  samples $r$ constitutes a trade-off between the number of times each node
  communicates with neighbors and the complexity of the local optimization
  problem it needs to solve. \oprocend
  \end{remark}

\subsection{Algorithm Convergence and Solution Probabilistic Guarantees}
\label{sec: analysis of the algorithm}
Here\MC{,} we analyze the convergence properties of the distributed
algorithm and investigate the probabilistic properties of the solution computed
by the algorithm.

  We start by defining the set of all \MC{possible} successive independent random extractions
  of all the sequences $\{\mathbf{q}_{k_i}^i\}_\MC{{k_i=1,\ldots,\infty}}$, $i=1,\dots,n$ as
  follows:
  \begin{align*}
    \mathbb{S} := \big\{\mathbf{q} :\;  \mathbf{q} = & [\mathbf{q}_1,\dots,\mathbf{q}_n],\; \\ & \mathbf{q}_i = \{\mathbf{q}_{k_i}^i\}_\MC{{k_i=1,\ldots,\infty}},\;\; i=1,\dots,n\big\}.
  \end{align*}
  Moreover, we define the set
  \begin{align*}
    \mathbb{S}_{\text{sol}} := \{\mathbf{q}\in\mathbb{S} :\;\; \text{Algorithm
    \ref{alg: distributed algorithm} terminates} \},
  \end{align*}
  which is the domain of the random variables $\mathbf{x}_{\text{sol}}^i$. 
  The probability measure on $\mathbf{x}_{\text{sol}}^i$ is therefore defined on this space, and it is denoted $\mathbb{P}_{\infty}$.
%
Now\MC{,} we are ready to present the first main result of the paper.

\begin{theorem}\label{thm: convergence }
Let Assumptions~\ref{assum: nondegeneracy} and~\ref{assum:graph} hold. Given the
probabilistic levels $\varepsilon_i>0$ and $\delta_i>0$, $i = 1,\ldots,n$, let
$\varepsilon= \sum_{i=1}^n\varepsilon_i$ and $\delta= \sum_{i=1}^n\delta_i$.
Then, the following statements hold:
\begin{enumerate}
\item \label{item:cost_convergence} Along the evolution of Algorithm~\ref{alg: distributed algorithm}, the
  cost $J(\cSet{B}^i(t))$ at each node $i\in\{1,\ldots,n\}$ is monotonically
  non-decreasing, i.e., $J(\cSet{B}^i(t+1))\geq J(\cSet{B}^i(t))$, and converges
  to a common value asymptotically. That is,
  $\lim_{t\rightarrow\infty}J(\cSet{B}^i(t))=\bar{J}$ for all
  $i\in\{1,\ldots,n\}$.
\item \label{item:stopping} If the candidate solution of node $i$,
  $\decision^i(t)$, has not changed for $2Ln+1$ communication rounds, all nodes
  have a common candidate solution, i.e.,
  $\decision_\textnormal{sol}^i = \decision_\textnormal{sol}$ for all $i = 1,\dots,n$.

\item \label{item:guarantee_xsol} The following inequality holds for $\decision_\textnormal{sol}$:
    \begin{align*}
      \mathbb{P}_\infty\!\bigg\{\mathbf{q}\in\mathbb{S}_{\textnormal{sol}} : & \mathbb{P}\bigg\{q\in\mathbb{Q} : \mathbf{x}_{\textup{\text{sol}}}\notin\bigcap_{i=1}^n\mathcal{F}^i(q)\bigg\}\! \\
      & \leq \epsilon \bigg\}\! \geq\! 1 - \delta.
    \end{align*}
\item \label{item:guarantee_basis} Let $\cSet{B}_\textnormal{sol}$ be the basis corresponding to $\decision_\textnormal{sol}$. The following inequality holds for $\cSet{B}_\textnormal{sol}$
\begin{align*}
\hspace{-0pt}\Pr_\infty \!\bigg\{\!\ms\in\mathbb{S}_{\textnormal{sol}}:&\Pr\bigg\{\!q\in\mathbb{Q}
  \!:\!J\big(\cSet{B}_\textnormal{sol}\cap \cSet{F}(q)\big)>J(\cSet{B}_\textnormal{sol})\bigg\}\\
  & \leq\varepsilon \bigg\}\geq 1\!-\!\delta
\end{align*}
\hspace{-6pt} where $\cSet{F}(q)\doteq \bigcap_{i=1}^n \cSet{F}^i(q)$. \oprocend
\end{enumerate}

\end{theorem}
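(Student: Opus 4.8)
The plan is to establish the two deterministic claims (items~1--2) first, since they produce the single consensual point $\decision_\text{sol}$ whose probabilistic certificates (items~3--4) are then proved on top of it. For item~1, monotonicity is structural: in step $O_2$ the constraint set $\FF^i(q^{\texttt{viol}}_{t,i})\cap\cSet{B}^i(t)\cap\cSet{Y}^i(t)$ always contains the previous basis $\cSet{B}^i(t)$ as a sub-collection, and intersecting with more constraints can only raise the optimal value, so $J(\cSet{B}^i(t+1))\ge J(\cSet{B}^i(t))$. Every constraint ever inserted is a genuine constraint of $(\cSet{F},c)$, so the costs stay bounded above by $J(\cSet{F})$, and a bounded monotone sequence converges to a per-node limit. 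To force these limits to a common $\bar J$, I would use Assumption~\ref{assum:graph}: over any window of $nL$ rounds joint strong connectivity guarantees a directed path between every ordered pair of nodes, so via the exchanges in $O_1$ a basis held anywhere reaches everyone within $nL$ rounds; since $J(A\cap B)\ge J(A)$, whoever holds the largest limit cost propagates it to all nodes, equalizing the limits. This is the constraints-consensus propagation of \cite{notarstefano2011distributed} adapted to the present update.

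For item~2 I would make this propagation quantitative. If $\decision^i$ is unchanged on a window of $2nL+1$ rounds, then within its first $nL$ rounds the basis $\cSet{B}^i$ certifying node $i$'s cost reaches, and is intersected by, every node, so all nodes attain cost $\ge J(\cSet{B}^i)$; if some node had a strictly larger cost, within the next $nL$ rounds that node's basis would travel back to $i$ and, intersected in $O_2$, would strictly increase $J(\cSet{B}^i)$ and change $\decision^i$, contradicting the stall. Hence every node shares the cost $\bar J=J(\cSet{B}^i)$, and by the uniqueness in Assumption~\ref{assum: nondegeneracy} they share the same minimizer $\decision_\text{sol}$.

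For item~3 I would run a sequential-randomized union bound separately at each node. Fix node $i$ and call a point ``$i$-bad'' if $\Pr\{q:\decision\notin\FF^i(q)\}>\varepsilon_i$. At verification instance $k_i$ the $M_{k_i}^i$ fresh samples are independent of the current candidate (a function of strictly earlier samples), so conditioned on the candidate being $i$-bad the chance that all $M_{k_i}^i$ sign checks pass is at most $(1-\varepsilon_i)^{M_{k_i}^i}$. The bound \eqref{eq:sample bound Mk} is engineered so that, using $(1-\varepsilon_i)^{1/\ln(1/(1-\varepsilon_i))}=e^{-1}$, one gets $(1-\varepsilon_i)^{M_{k_i}^i}\le e^{-2.3}k_i^{-1.1}\delta_i$; because $\sum_{k\ge1}k^{-1.1}$ converges, the summed bound $\sum_{k_i\ge1}(1-\varepsilon_i)^{M_{k_i}^i}$ stays below $\delta_i$. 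A union bound over all verification instances then yields $\Pr_\infty(\text{node } i \text{ ever accepts an } i\text{-bad point})\le\delta_i$. Since $\decision_\text{sol}$ is common (item~2) and passed node $i$'s final verification, off an event of probability $\le\delta_i$ its $i$-local violation is $\le\varepsilon_i$; a second union bound over the $n$ nodes shows that off an event of probability $\le\sum_i\delta_i=\delta$ all local violations are simultaneously $\le\varepsilon_i$, whence $\Pr\{q:\decision_\text{sol}\notin\bigcap_i\FF^i(q)\}\le\sum_i\Pr\{q:\decision_\text{sol}\notin\FF^i(q)\}\le\sum_i\varepsilon_i=\varepsilon$, which is item~3.

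For item~4 I would reduce to item~3. As $\decision_\text{sol}$ is the unique minimizer over $\cSet{B}_\text{sol}$ (Assumption~\ref{assum: nondegeneracy}), appending a fresh constraint $\FF(q)$ leaves the optimal cost unchanged when $\decision_\text{sol}\in\FF(q)$ and, by non-degeneracy, strictly increases it otherwise; thus $\{J(\cSet{B}_\text{sol}\cap\FF(q))>J(\cSet{B}_\text{sol})\}$ coincides, up to a measure-zero degenerate set, with $\{\decision_\text{sol}\notin\FF(q)\}=\{\decision_\text{sol}\notin\bigcap_i\FF^i(q)\}$, so item~4 is the inequality already established. The main obstacle is the coupling between the adaptive, asynchronous consensus dynamics and the independence needed for the union bound: the candidate verified at instance $k_i$ is produced by all previous samples across the whole network, so one must argue carefully that conditioning on that history still leaves the $M_{k_i}^i$ new samples i.i.d.\ from $\Pr$, and that the counter $k_i$ indexes exactly the instances where verification truly runs (step $V_1$ skipping unchanged candidates). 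Earning the precise constant $2nL+1$ under directed, time-varying, lossy communication is the other delicate point.
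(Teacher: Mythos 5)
Your proposal follows essentially the same route as the paper's own proof: the same $\leq nL$ time-varying path-propagation argument with the $2nL$ round trip plus Assumption~\ref{assum: nondegeneracy} for items~1--2, the same per-node sequential analysis for item~3 (conditional acceptance probability $(1-\varepsilon_i)^{M_{k_i}^i}$, summed over verification instances $k_i$ with the sample size chosen so the resulting $\zeta$-like series is bounded by $\delta_i$, then union bounds over $k_i$ and over the $n$ nodes together with $\Bad = \bigcup_i \Bad_i$), and the same reduction of item~4 to item~3 via the set identity $\{q : \decision_\textnormal{sol}\notin\cSet{F}(q)\} = \{q : J(\cSet{B}_\textnormal{sol}\cap\cSet{F}(q)) > J(\cSet{B}_\textnormal{sol})\}$ obtained from non-degeneracy. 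The only differences are cosmetic: the paper outsources item~1 to a citation, and it proves the item~4 set equality exactly rather than ``up to a measure-zero set.''
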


The proof is given in Appendix~\ref{app: proof of theorem}.

We briefly discuss the results of this theorem.
Statement~\ref{item:cost_convergence} shows that agents are increasing their
local cost and asymptotically achieve the same cost. This result gives insights
on the following statement.  Statement~\ref{item:stopping} provides a condition
under which processors can stop the distributed algorithm with the guarantee of
having a common solution, whose probabilistic properties are shown in the next
statements.
In the next subsection, we provide a variation of the algorithm that ensures to
stop the algorithm in \MC{finite time}. We point out, though, that numerical
computations show that the algorithm in the original version always satisfies
the condition of \MC{S}tatement~\ref{item:stopping}.
It is also worth mentioning that for a fixed communication graph the bound
$2nL+1$ in \MC{S}tatement~\ref{item:stopping} to stop the algorithm can be replaced
by $2D+1$ where $D$ is the graph diameter.
Statement~\ref{item:guarantee_xsol} says that the probability of violating the
(common) solution $\decision_\text{sol}$ with a new sample (constraint) is
smaller than $\varepsilon$ and this statement holds with probability at least
$1-\delta$. Similarly, based on \MC{S}tatement~\ref{item:guarantee_basis}, the
probability that the cost $J(\cSet{B}_\text{sol})$ associated to
$\decision_\text{sol}$ increases, i.e., the probability of $\decision_\text{sol}$
not being optimal, if a new sample (constraint) is extracted, is smaller
than $\varepsilon$ and this statement holds with probability at least
$1-\delta$.

 \begin{remark}[S-Optimization]
   The class of problems that Algorithm~\ref{alg: distributed algorithm} can
   solve is not limited to mixed-integer problems. The algorithm is immediately
   applicable to any problem having a finite $S$-Helly number. This class of
   problems have recently been introduced---under the name of
   S-optimization problems---in \cite{de_loera_beyond_2015}.
    It is worth mentioning that \MC{some} classes of non-convex problems have finite
    $S$-Helly number---see \cite{amenta_hellys_2015} and references
    therein---and hence can be solved using Algorithm~\ref{alg: distributed
      algorithm}.   \oprocend
  \end{remark}

\subsection{Modified Algorithm with Finite-Time Convergence
  Guarantee} \label{sec: finite time convergence}
In this subsection, we derive a stopping criterion for Algorithm~\ref{alg:
  distributed algorithm} so that it is guaranteed to be halted in \MC{\emph{finite
time}}.
Indeed, there is no guarantee for the condition in \MC{S}tatement~\ref{item:stopping}
of Theorem~\ref{thm: convergence } to be satisfied (and thus for
Algorithm~\ref{alg: distributed algorithm} to be halted in finite-time), even
though we found this to be the case for all simulations we run (see  Section
\ref{sec: numerical simulations} for more details).

The stopping criterion is derived by borrowing tools from the scenario approach
presented in \cite{calafiore_uncertain_2004,calafiore_scenario_2006}.  In the
scenario approach, the idea is to extract a finite number of samples from the
constraint set of the semi-infinite optimization problem~\eqref{eq:RMICP} and
solve the obtained (deterministic) optimization problem. The developed theory
provides bounds on the number of samples to guarantee a desired probabilistic
robustness.
The main idea of the stopping criterion is the following. Once each
$M_{k_i}^{i}$ (the cardinality of the multi-sample used in the verification step
of Algorithm~\ref{alg: distributed algorithm}) exceeds a (common) threshold
associated to a proper scenario bound, agents stop generating new samples in the
verification step and Algorithm~\ref{alg: distributed algorithm} proceeds with
the multi-sample extracted at the last iteration. Thereby, Algorithm~\ref{alg:
  distributed algorithm} solves (in a distributed way) a scenario problem and in
finite time finds a solution feasible for the extracted samples across all
the nodes. Each node has a flag which is communicated locally among all
neighbors. The flag is initialized to zero and is set to one once $M_{k_i}^{i}$
is greater than or equal to the scenario bound. We remark that using, for
example, the algorithm presented in \cite{farina2019distributed} nodes can
identify---in a fully distributed manner---if all flags across the network are
set to one and thus stop generating new samples.

In~\cite{margellos2016distributed}\MC{,} scenario bounds are proposed for a
distributed robust convex optimization framework in which agents use a different
set of uncertainty scenarios in their local optimization programs.
In the next \MC{L}emma we provide analogous bounds for our problem setup.
To this aim, let us consider the following optimization problem
\begin{align}\nonumber
\min\,\,& c^T\decision\\ \nonumber
\text{subject to }& \decision\in \bigcap_{i=1}^n\bigcap_{\ell=1}^{M_i^\text{scen}} \FF^i(q^{(\ell)}), \\ \label{eq:sampled RMICP}
& \decision\in S,
\end{align}
where $M_i^\text{scen}$ is the smallest integer satisfying
\begin{equation}\label{eq:scenario bound distributed}
\delta_i\geq \sum_{\ell=0}^{h(S)-2} { M_i^\text{scen} \choose
  \ell}\varepsilon_i^\ell(1-\varepsilon_i)^{M_i^\text{scen}-\ell}.
\end{equation}

\begin{lemma}\label{lemma: distributed scenario}
  Given the probabilistic levels $\varepsilon_i,\delta_i>0,\, i= 1,\ldots,n$, let
  $\varepsilon=\sum_{i=1}^{n}\varepsilon_i$, $\delta=\sum_{i=1}^n \delta_i$ and
  $M^\textnormal{scen}=\sum_{i=1}^n M_i^\textnormal{scen}$, where $M_i^\textnormal{scen}$ is the smallest integer satisfying
  \eqref{eq:scenario bound distributed}. If agents formulate the sampled
  optimization problem \eqref{eq:sampled RMICP} and all of them
  agree on an optimal solution $\decision^*$, 
\begin{align*}
  \Pr^{M^\textnormal{scen}}\bigg\{\ms\in\mathbb{Q}^{M^\textnormal{scen}}:&
                 \Pr\bigg\{q\in\mathbb{Q}:\decision^*\notin \bigcap_{i=1}^n\FF^i(q)\bigg\}\\
                &\leq\varepsilon \bigg\}\geq1-\delta, 
\end{align*}
\MC{where $\bQ^{M^\textnormal{scen}}=\bQ\times\bQ\times\ldots\times\bQ$ ($M^\textnormal{scen}$ times) and $\Pr^{M^\textnormal{scen}}$ is the product probability measure on $\bQ^{M^\textnormal{scen}}$. }
\oprocend
\end{lemma}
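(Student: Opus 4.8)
The plan is to reduce this multi-agent scenario bound to $n$ single-agent scenario bounds, one per node, and then stitch them together with a union bound. For each node $i$ I would introduce the local violation probability $\Bad_i(\decision)\doteq\Pr\{q\in\bQ:\decision\notin\FF^i(q)\}$. Since $\decision^*\notin\bigcap_{i=1}^n\FF^i(q)$ can occur only if $\decision^*\notin\FF^i(q)$ for at least one $i$, Boole's inequality gives the pointwise estimate $\Pr\{q:\decision^*\notin\bigcap_i\FF^i(q)\}\le\sum_{i=1}^n\Bad_i(\decision^*)$. Consequently, with $\varepsilon=\sum_i\varepsilon_i$, the ``bad'' event $\{\sum_i\Bad_i(\decision^*)>\varepsilon\}$ is contained in $\bigcup_{i=1}^n\{\Bad_i(\decision^*)>\varepsilon_i\}$, because if every local violation satisfied $\Bad_i(\decision^*)\le\varepsilon_i$ their sum could not exceed $\varepsilon$. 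Combining the two inclusions, it suffices to show that $\Pr^{M^\text{scen}}\{\Bad_i(\decision^*)>\varepsilon_i\}\le\delta_i$ for each $i$ and then sum.

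To establish this per-node claim I would condition on the samples of all nodes $j\ne i$ and reinterpret the common optimizer $\decision^*$ as the solution of a scenario program in node $i$'s samples alone. Writing $\cSet{C}_{-i}\doteq S\cap\bigcap_{j\ne i}\bigcap_{\ell}\FF^j(q^{(\ell)})$ for the now-fixed feasible region contributed by the other nodes, the agreed solution $\decision^*$ minimizes $c^T\decision$ over $\cSet{C}_{-i}\cap\bigcap_{\ell=1}^{M_i^\text{scen}}\FF^i(q^{(\ell)})$. This is a random (mixed-integer) convex program whose only stochastic constraints are node $i$'s sampled sets, and whose combinatorial dimension is at most $h(S)-1$ by the theorem relating combinatorial dimension to the $S$-Helly number; the deterministic constraints inside $\cSet{C}_{-i}$ can only consume part of a basis, never enlarge it. I would then invoke the scenario bound for random convex programs of combinatorial dimension $h(S)-1$ (the mixed-integer scenario theory of \cite{calafiore2012mixed}): conditionally on the other nodes' samples, the probability over node $i$'s $M_i^\text{scen}$ draws that a fresh sample violates $\decision^*$ by more than $\varepsilon_i$ is at most $\sum_{\ell=0}^{h(S)-2}\binom{M_i^\text{scen}}{\ell}\varepsilon_i^\ell(1-\varepsilon_i)^{M_i^\text{scen}-\ell}$, which is $\le\delta_i$ by the defining inequality \eqref{eq:scenario bound distributed} for $M_i^\text{scen}$.

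Because this conditional bound is uniform over all realizations of the other nodes' samples, and because the product measure $\Pr^{M^\text{scen}}$ factorizes across nodes, taking the expectation over the remaining coordinates yields the unconditional estimate $\Pr^{M^\text{scen}}\{\Bad_i(\decision^*)>\varepsilon_i\}\le\delta_i$. Summing over $i$ through the union bound established in the first step gives $\Pr^{M^\text{scen}}\{\sum_i\Bad_i(\decision^*)>\varepsilon\}\le\sum_i\delta_i=\delta$, and passing to the complementary event produces exactly the claimed inequality.

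The main obstacle I anticipate is the conditional reduction itself: one must argue rigorously that, after freezing the other nodes' sampled constraints, the agreed optimizer $\decision^*$ genuinely coincides with the optimizer of the reduced single-node scenario program, and that the hypotheses of the scenario theorem—feasibility together with the uniqueness guaranteed by Assumption~\ref{assum: nondegeneracy}—carry over to \emph{every} conditional instance, so that the classical bound applies with the correct combinatorial dimension $h(S)-1$ rather than a larger effective dimension. Once this reduction is justified, the union-bound decomposition and the substitution of \eqref{eq:scenario bound distributed} are routine.
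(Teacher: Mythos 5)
Your proposal is correct and takes essentially the same route as the paper: the paper omits the proof, deferring to the argument of Proposition 1 in \cite{margellos2016distributed} with the support-constraint cardinality replaced by the $S$-Helly-based combinatorial dimension $h(S)-1$ from \cite{calafiore2012mixed}, which is exactly the per-node union-bound decomposition plus conditional single-node scenario argument you spell out. Your explicit handling of the conditioning step---and of why the other nodes' frozen sampled constraints can only consume, never enlarge, a basis---is precisely the adaptation the paper alludes to when it says the Helly number $d+1$ must be replaced by $(d_R+1)2^{d_Z}$.
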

\vspace{1ex}
We omit the proof since it follows \MC{arguments similar to those in } in
\cite[Proposition 1]{margellos2016distributed}. Specifically, by using the
result in~\cite[Theorem 3]{calafiore2012mixed}\MC{,} one can show that the same
arguments in \cite[Proposition 1]{margellos2016distributed} can be followed by
properly changing the cardinality of the support constraint set. In particular,
in the proof of \cite[Proposition 1]{margellos2016distributed} the (Helly)
number $d+1$ of the (continuous) convex program, with $d$ being the dimension of
the decision variable, can be replaced by the $S$-Helly number $(d_R+1)2^{d_Z}$
(as from Theorem~\ref{thm:S-helly dimension}) of problem~\eqref{eq:RMICP}.

The following Proposition summarizes the convergence properties and the solution
guarantees of the modified algorithm (Algorithm~\ref{alg: distributed algorithm}
with the stopping criterion). We report only the feasibility result
corresponding to \MC{S}tatement~\ref{item:guarantee_xsol} of Theorem~\ref{thm:
  convergence }, but also the equivalent of \MC{S}tatement~\ref{item:guarantee_basis}
can be proven. 
\begin{proposition}\label{lemma: finite time convergence}
  Given the probabilistic levels $\varepsilon_i>0$ and $\delta_i>0$,
  $i = 1,\ldots,n$, let $\varepsilon= \sum_{i=1}^n\varepsilon_i$ and
  $\delta= \sum_{i=1}^n\delta_i$. If nodes stop performing steps $V_2$ and $V_4$
  of Algorithm~\ref{alg: distributed algorithm} when $M_{k_i}^{i}\geq M_i^\textnormal{scen}$ for
  all $i=1,\ldots,n$, then the following statements hold:
\begin{enumerate}
\item nodes agree on a solution $\decision_\textnormal{sol}$ in \MC{finite time};
\item 
the solution $\decision_\textnormal{sol}$ satisfies
\begin{align}\nonumber
\!\!\!\!\!\!\! \Pr^M\bigg\{\ms\in\mathbb{Q}^M:
&\Pr\bigg\{q\in\mathbb{Q}: \decision_\textnormal{sol}\notin \bigcap_{i=1}^n\FF^i(q)\bigg\}\\ \label{eq:finite time convergence result}
&\leq\varepsilon \bigg\}\geq1-\delta,
\end{align}
where $M = \sum_{i} M_{k_i}^{i}$.
\end{enumerate}
\end{proposition}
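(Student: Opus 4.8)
The plan is to handle the two claims in turn: first reduce the finite-time termination to the behaviour of the deterministic constraint-exchange dynamics once sampling has ceased, and then read off the probabilistic guarantee from Lemma~\ref{lemma: distributed scenario}. For the first statement I would argue by contradiction. Suppose the algorithm runs forever. Then the termination test $O_3$ never fires at any node, so no candidate $\decision^i(t)$ stays unchanged for $2nL+1$ consecutive rounds; equivalently, every node changes its candidate infinitely often. By step $V_1$ a verification (hence the increment $k_i\leftarrow k_i+1$ in $V_4$) is executed exactly when the candidate has just changed, so $k_i\to\infty$ for every $i$. Since the rule~\eqref{eq:sample bound Mk} makes $M_{k_i}^i$ nondecreasing and unbounded in $k_i$, each node reaches $M_{k_i}^i\ge M_i^\textnormal{scen}$ after finitely many rounds and, by the stopping rule, ceases extracting fresh samples, the distributed flag mechanism of~\cite{farina2019distributed} letting every node detect this in finite time. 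From that instant on, node $i$ owns a \emph{fixed} finite family $\{\cSet{F}^i(q_{k_i,i}^{(\ell)})\}_{\ell=1}^{M_{k_i}^i}$, and Algorithm~\ref{alg: distributed algorithm} coincides with the deterministic dynamics of~\cite{notarstefano2011distributed} applied to the sampled problem~\eqref{eq:sampled RMICP}.

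With a frozen, finite pool of constraints the convergence becomes a finiteness argument. The attainable costs $J(\cSet{B})$ range over the finite collection of subfamilies of the pool, while $J(\cSet{B}^i(t))$ is monotonically nondecreasing by Statement~\ref{item:cost_convergence} of Theorem~\ref{thm: convergence }; a monotone sequence in a finite set is eventually constant, so each node's cost stabilises in finite time at the common value $\bar J$. Non-degeneracy (Assumption~\ref{assum: nondegeneracy}) then makes the minimiser attaining $\bar J$ unique, so the exchange of bases over the jointly strongly connected graph (Assumption~\ref{assum:graph}) drives the candidates to a common, no-longer-changing point; after $2nL+1$ such rounds $O_3$ fires, contradicting non-termination. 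Hence the algorithm stops in finite time, and by Statement~\ref{item:stopping} of Theorem~\ref{thm: convergence } all nodes agree on a single $\decision_\textnormal{sol}$.

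For the second statement, note that at termination $\decision_\textnormal{sol}$ is, by the finiteness argument above, the optimal solution of~\eqref{eq:sampled RMICP} in which node $i$ contributes its $M_{k_i}^i$ frozen constraints (it is feasible, since otherwise verification would return a violated sample and change the candidate, and optimal as the common value of the relaxations). Since $M_{k_i}^i\ge M_i^\textnormal{scen}$ and the binomial tail $\sum_{\ell=0}^{h(S)-2}\binom{M}{\ell}\varepsilon_i^{\ell}(1-\varepsilon_i)^{M-\ell}$ is nonincreasing in $M$ (appending a Bernoulli trial stochastically increases the count, so the probability of staying below the fixed threshold $h(S)-2$ can only drop), the scenario inequality~\eqref{eq:scenario bound distributed} stays valid with $M_{k_i}^i$ in place of $M_i^\textnormal{scen}$. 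I would then invoke Lemma~\ref{lemma: distributed scenario}---whose proof only requires that the nodes have agreed on a common optimiser of a sampled problem whose per-node counts satisfy~\eqref{eq:scenario bound distributed}---to obtain~\eqref{eq:finite time convergence result} with $M=\sum_i M_{k_i}^i$.

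The delicate part is the first statement, and specifically the justification that sampling provably ceases at \emph{every} node: one must rule out a node whose candidate freezes \emph{before} its sample count reaches $M_i^\textnormal{scen}$ while the rest of the network keeps evolving. This is exactly where Assumption~\ref{assum:graph} is essential---uniform joint strong connectivity guarantees that any persistent change elsewhere reaches every node within $L$ rounds, so no node can remain permanently frozen unless the entire network has already converged. Once this is secured, the frozen-pool finiteness argument and the monotonicity of the binomial tail are routine, and the probabilistic conclusion follows directly from the already-established Lemma.
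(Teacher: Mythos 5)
Your proof is correct and follows essentially the same route as the paper's: you reduce the problem to the deterministic setting once every node's sample pool is frozen (arguing, as the paper does implicitly, that non-termination forces $k_i\to\infty$ and hence $M_{k_i}^{i}\geq M_i^\textnormal{scen}$), establish finite-time agreement via monotonicity of the cost, finiteness of the bases, Assumption~\ref{assum: nondegeneracy} and Assumption~\ref{assum:graph}, and then invoke Lemma~\ref{lemma: distributed scenario} for the probabilistic guarantee. Your explicit observations---that verification is triggered exactly by candidate changes, and that the binomial tail in \eqref{eq:scenario bound distributed} is nonincreasing in the sample size so the bound survives replacing $M_i^\textnormal{scen}$ by $M_{k_i}^{i}$---are details the paper leaves implicit, but they do not change the structure of the argument.
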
 
\begin{proof}
	  \GN{ First, note that if the \MC{A}lgorithm does not stop due to
    \MC{C}ondition~\ref{item:stopping} of Theorem~\ref{thm: convergence }, then all
    nodes stop generating new samples in finite time. When all nodes have
    stopped generating samples they start solving the following deterministic
    optimization problem
  \begin{align}\nonumber
  \min\,\,& c^T\decision\\ \nonumber
  \text{subject to }& \decision\in \bigcap_{i=1}^n\bigcap_{\ell=1}^{M_{k_i}^i} \FF^i(q^{(\ell)}), \\ \label{eq:scenario proof}
  & \decision\in S,
  \end{align}
    where $M_{k_i}^{i}\geq M_i^\textnormal{scen}$ is the cardinality of the verification
  multi-sample at the stopping point.
  To prove that nodes agree in finite time on an optimal solution of
  \eqref{eq:scenario proof}\MC{,} we resort to the following two arguments. First, as
  from \MC{S}tatement \ref{item:cost_convergence} of Theorem~\ref{thm: convergence },
  the cost is monotonically non-decreasing. Second, the number of constraints in
  the network is fixed, thus the number of possible bases is finite. The proof
  of the first statement, thus, follows \MC{from arguments analogous as those in} 
  \cite{notarstefano2011distributed}. Indeed, when nodes stop generating
  samples, the Randomized Constraints Consensus algorithm becomes deterministic
  and turns out to be a variant of the Constraints Consensus algorithm proposed
  in \cite{notarstefano2011distributed}.
  Due to Assumption~\ref{assum: nondegeneracy}, nodes agree on a
  $\decision_\textnormal{sol}$, which is the unique optimal solution of problem
  \eqref{eq:scenario proof}.
  Thus, the second statement follows by noting that, from Lemma~\ref{lemma:
    distributed scenario}, $\decision_\textnormal{sol}$ satisfies \eqref{eq:finite time
    convergence result} since the number of random constraints at each node
  $M_{k_i}^i$ is grater than or equal to the scenario bound $M_i^\textnormal{scen}$.  }
\end{proof}

\section{Numerical Simulation\MC{s}}\label{sec: numerical simulations}
We test the effectiveness of the distributed algorithm presented in
Section~\ref{sec: distributed algorithm} through extensive numerical
simulations.  To this end, we consider two different problems: \emph{(1)}
randomly generated mixed-integer linear programs (MILP) with uncertain
parameters, and \emph{(2)} distributed convex position estimation in wireless
sensor networks.
\MC{These} two numerical simulations are discussed in the next
subsections.

\GN{Numerical computations are run on a Linux\MC{-}based high performance computing
cluster\footnote{ http://idc.sutd.edu.sg/titan/} with $256$ CPUs. Each node uses
only one CPU of the cluster and
executes Algorithm~\ref{alg: distributed algorithm} in an independent Matlab
environment. The communication is modeled by sharing files between different
Matlab environments over a fixed digraph.
}

\subsection{Uncertain Mixed-Integer Linear Programming}
We randomly generate robust \MC{MILPs} with the
following structure
\begin{align*}
\min&~~c^T\decision\\ 
\text{subject to}&~~ 
(A_i^0+A_i^q)^T\decision\leq b_i,\quad i=1,\dots,n\\
& ~~\decision\in \mathbb{Z}^2\times\mathbb{R}^3, 
\end{align*}
where $A_i^0$ is a fixed (nominal) matrix and $A_i^q$ is an interval matrix---a
matrix whose entries are bounded in given intervals---defining the uncertainty
in the optimization problem. We follow the methodology presented in
\cite{dunham_experimental_1977} in order to generate the pair $A_i^0,~b_i$ and
the objective direction $c$. 
In particular, elements of $A_i^0$ are drawn from a standard Gaussian
distribution\MC{, i.e. zero mean and unit standard deviation}. The entries of $A_i^q$ are bounded in $[-\rho, \rho]$ for
some given $\rho$.
%
The $\ell$-th element of $b_i$ is defined by
$b_{i,\ell} = \gamma \, (\sum_{m=1}^{d}\big(A^0_{\ell m}\big)^2)^{1/2}$. This
procedure generates feasible linear programs. However, introducing discrete
variables in a feasible linear program may lead to an infeasible MILP. To ensure
feasibility of the MILP, we increase the volume of the feasible region using the
parameter $\gamma>1$. In the set of simulations reported here, we set
$\gamma=20$.
The objective direction $c$---which is the same for all nodes---is drawn from
the standard Gaussian distribution. The communication graph $\mathcal{G}$ is a
(connected) random $k$-nearest neighbor graph (with $k$ being the fixed \MC{degree or} number
of neighbors) \cite{frieze2015introduction}.
\GN{Over this fixed graph, agents implement the distributed algorithm according
  to the asynchronous and unreliable communication protocol (based on Matlab
  environments) described above.  
  In particular, only one node at a time is able to read/write from/to each node
  file containing its basis.  Hence, if more than one node is willing to
  read/write from/to a file, only one would be allowed to do so. This gives
  raise to an asynchronous and unreliable communication protocol. }

 \begin{table*}[t]
 \caption{\MC{A}verage---over all nodes---number of times a basis is transmitted to the neighbors,  average number of times verification is performed ($k_i$ at the convergence) and empirical violation of the computed solution ($\decision_\text{sol}$) over $10,000$ random samples for different number of nodes and neighbors in each node. The simulation is performed $100$ times for each row and average results are reported.}
 \begin{center}
 \scalebox{0.9}{
 \begin{tabular}{c|c|c|c||c|c|c}
 \toprule
\# Nodes  & \# Neighbors & Graph&  \# Constraints &  \# Transmissions &  \GN{\# Verifications} & Empirical \tabularnewline
 $n$  &  in each node \MC{(degree)} & diameter & in each node &       (averaged)    & (averaged)    & violation \tabularnewline
 \midrule
 \midrule
 $10$  &  $3$  & $4$ &  $100$  & $29.86$  & $15.6$  & $8.18\times 10^{-4}$   \tabularnewline
 \midrule
 $50$  &  $4$  &  $4$ & $100$  & $37.58$   & $10.73$ & $2\times 10^{-4}$       \tabularnewline
 \midrule
$100$  &  $6$  &  $4$ & $100$  & $41.73$   & $9.79$ & $3\times 10^{-4}$       \tabularnewline
\midrule
$200$  &  $8$  &  $4$ & $100$  & $43.94$   & $8.92$ & $2.3\times 10^{-4}$       \tabularnewline

 \bottomrule
 \end{tabular}
 }
 \end{center}

 \label{tab: simulation results}
 \end{table*}

 We use the \texttt{intlinprog} function of Mosek \cite{andersen2000mosek} to
 solve optimization problems appearing at each iteration of the distributed
 algorithm.
 \GN{As for the basis computation\MC{,} it is worth mentioning that in (continuous)
   convex optimization problems, bases can be found by \MC{identifying} a minimal
   set of active constraints.}
 \MC{For mixed-integer problems, however, finding the bases can be computationally expensive. A computationally manageable procedure for finding a bases set---which might not  be of minimum cardinality---is to check all constraints one by one to see
 whether or not they belong to a basis. To this end, we drop one constraint at a time  and solve the resulting optimization problem. If the optimal objective of this optimization problem is smaller than the objective value of the original problem, we conclude that the dropped constraint is part of the basis. }
However, we note that, this \MC{brute-force} approach is performed on a limited
number of constraints---the constraint formed at the violation certificate,
constraints in the node basis, and constraints in the neighboring bases.


\MC{For} the computations reported in Table~\ref{tab: simulation results}, we varied
the number of nodes and neighbors per node, \GN{while using only those graphs
  \MC{having a} diameter equal to $4$.} The number of constraints per node is set to
$100$. We also consider all elements of $A_q$ to be bounded in $[-0.2,0.2]$. The
underlying probability distribution of uncertainty appearing in $A_q$ is
selected to be uniform, see, e.g., \cite{bai_worst-case_1998}.  The
probabilistic accuracy and confidence levels of each agent are
$\varepsilon_i = 0.1/n$ and $\delta_i = 10^{-9}/n$ respectively, with $n$ being
the number of nodes (first column of Table~\ref{tab: simulation results}).
 We report the average---over all nodes---number of times each node updates its
 basis and transmits it to the outgoing neighbors. It is assumed that each node
 keeps the latest information received from neighbors and hence, if the basis is
 not updated, there is no need to re-transmit it to the neighbors. This also
 accounts for the asynchronicity of the distributed algorithm. 
 We also report the average---over all nodes---number of times node $i$ performs
 the verification step, i.e., \GN{the average\MC{---over all the nodes---}of the $k_i$s at convergence.}
 This allows us to show that with a relatively small number of ``design''
 samples used in step $O_2$ of Algorithm \ref{alg: distributed algorithm}, nodes
 compute a solution with \MC{a} high degree of robustness.
 In order to examine the robustness of the obtained solution, we run an {\em a
   posteriori} analysis based on Monte Carlo simulations. \GN{To this end, we
 check the feasibility of the
 obtained solution for $10,000$ random samples extracted from the uncertain
 sets of the nodes.} The empirical violation (last column of Table~\ref{tab: simulation
   results}) is measured by dividing the number of samples that violate the
 solution by $10,000$.
 \GN{ Since Algorithm~\ref{alg: distributed algorithm} has a stochastic nature,
   for each row of Table~\ref{tab: simulation results} we generate randomly
   $100$ problems, solve them using Algorithm \ref{alg: distributed algorithm},
   run an {\em a posteriori} analysis and then report the average values.}

\begin{figure}
\begin{center}
\includegraphics[width=0.98\columnwidth]{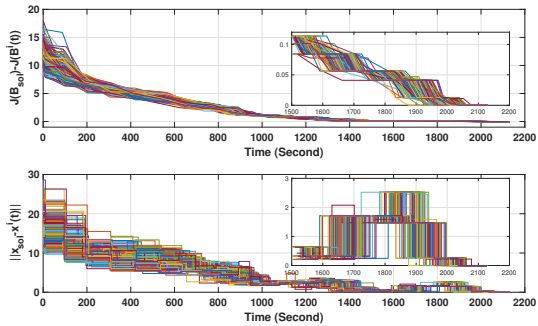}    
\caption{Top: Distance to $J(\cSet{B}_\text{sol})$ which is the objective value
  algorithm converges to. Bottom: Distance to $\decision_\text{sol}$ which is
  the solution algorithm converges to. The two plots are generated for a problem
  instance corresponding to the the last row of Table~\ref{tab: simulation
    results}.}
\label{fig:objective and distance}
\end{center}
\end{figure}

In Figure~\ref{fig:objective and distance}, we report the distance of objective
value $J(\cSet{B}^i(t))$ and candidate solutions
$\decision^i(t), \forall i\in\{1,\ldots,n\}$ from $J(\cSet{B}_\text{sol})$ and
$\decision_\text{sol}$ respectively along the distributed algorithm execution
for a problem instance corresponding to the last row of Table~\ref{tab: simulation results}. 

In the \MC{following} computations\MC{,} we test the effect of using more violation certificates
in the optimization step---see Remark~\ref{rem:violation
  Certificate}. Specifically, we use in the optimization step up to $10$
violating certificates obtained from the verification step.
  As  expect\MC{ed}, this decreases the number of communications required for
  convergence at the expense of the local optimization complexity.  This result
  is reported in Table \ref{tab: simulation results Multiple violations}.
  \begin{table*}[t]
   \caption{\MC{A}verage---over all nodes---number of times a basis is
     transmitted to the neighbors, average number of times verification is
     performed ($k_i$ at the convergence) and empirical violation of the computed
     solution ($\decision_\text{sol}$) over $10,000$ random samples for
     different number of nodes and neighbors in each node. We allow $10$
     violation samples to be returned by the verification step. The simulation
     is performed $100$ times for each row and average results are reported. }
 \begin{center}
 \scalebox{0.9}{
 \begin{tabular}{c|c|c|c||c|c|c}
 \toprule
\# Nodes  & \# Neighbors & Graph&  \# Constraints &  \# Transmissions &  \GN{\#
                                                                       Verifications}
   & Empirical \tabularnewline
 $n$  &  in each node \MC{(degree)}& diameter & in each node &       (averaged)    & (averaged)    & violation \tabularnewline
 \midrule
 \midrule
 $10$  &  $3$  & $4$ &  $100$  & $16.1$  & $8.1$  & $10\times 10^{-3}$   \tabularnewline
 \midrule
 $50$  &  $4$  &  $4$ & $100$  & $24.37$   & $7.05$ & $9\times 10^{-3}$       \tabularnewline
 \midrule
$100$  &  $6$  &  $4$ & $100$  & $26.25$   & $6.3$ & $4\times 10^{-3}$       \tabularnewline

 \bottomrule
 \end{tabular}
 }
 \end{center}

 \label{tab: simulation results Multiple violations}
 \end{table*}

In all $700$ simulations reported in Tables \ref{tab: simulation results} and
\ref{tab: simulation results Multiple violations}, Algorithm~\ref{alg:
  distributed algorithm} converges to a solution with desired probabilistic
robustness in finite time. We further remark that in none of the simulations we
\MC{ran}, the stopping criterion derived in subsection \ref{sec: finite time
  convergence} was met. In fact\MC{,} a closer investigation of the stopping condition
presented in Lemma~\ref{lemma: finite time convergence}---namely
$M_{k_i}^{i}\geq M_i^\text{scen}\text{ for all }i\in\{1,\ldots,n\}$---reveals that this
condition happens only for extremely large values of the verification counter
$k_i$. In \cite[Theorem 4]{alamo_randomized_2015}, an analytical \MC{sub-optimal} solution for
the sample size $M_i^\text{scen}$ in the inequality \eqref{eq:scenario bound distributed} is
provided
\begin{equation}\label{eq:alamo bound}
  M^\text{scen}_i\geq \frac{1.582}{\varepsilon}\bigg(\ln \frac{1}{\delta_i}+h(s)-2\bigg).
\end{equation}
Solving the inequality $M_{k_i}^{i}\geq M^\text{scen}_i$ for $k_i$ and noting that $\ln \big(\frac{1}{1-\varepsilon_i}\big)\simeq\varepsilon_i$ for small $\varepsilon_i$, we obtain 
\[
k_i \geq \exp\left(\frac{0.58\ln\frac{1}{\delta_i}+1.58(h(S)-2)-2.3}{1.1}\right).
\]
The Helly number associated to the MILP \MC{envisaged} here is $16$. Considering,
for example, $\varepsilon_i=0.01$ and $\delta_i = 10^{-10}$, the verification
counter ensuring that $M_{k_i}^{i}\geq M^\text{scen}_i$ becomes
$k_i\geq 1.2\times 10 ^{13}$. By looking at the sixth column of Tables \ref{tab:
  simulation results} \MC{and \ref{tab: simulation results Multiple violations}}\MC{,} which indicates the value of verification counter at
convergence, one can see that the distributed algorithm converges to a solution
with desired probabilistic properties in a much smaller number of verification
steps.

\subsection{Distributed Localization in Wireless Sensor Networks}
\label{sec: application example}
The second numerical example is \MC{a} problem of distributed position estimation
in wireless sensor networks. A centralized version of this problem---with no
uncertainty---is formulated in~\cite{sensor_network}.

Consider a two-dimensional\footnote{Extension to a three-dimensional space is
  straightforward.} space containing a number of heterogeneous wireless sensors
which are randomly placed over a given environment.
The sensors are of two classes. \MC{The first class are wireless sensors with known positions (SwKP)} which are capable of
positioning themselves up to a given accuracy, i.e., with some uncertainty in
their position. They are equipped with processors with limited
computation/communication capabilities and play the role of computational nodes
in the distributed optimization framework. These sensors can communicate with
each other based on a metric distance. That is, two sensors which are close
enough can establish a bidirectional communication link.
\MC{The second class is a  wireless sensor with unknown position (SwUP) which  has no positioning capabilities. This sensor is
  only equipped with a short-range transmitter having an isotropic  communication range.}

\begin{figure}
\centering
\includegraphics[width=0.48\textwidth]{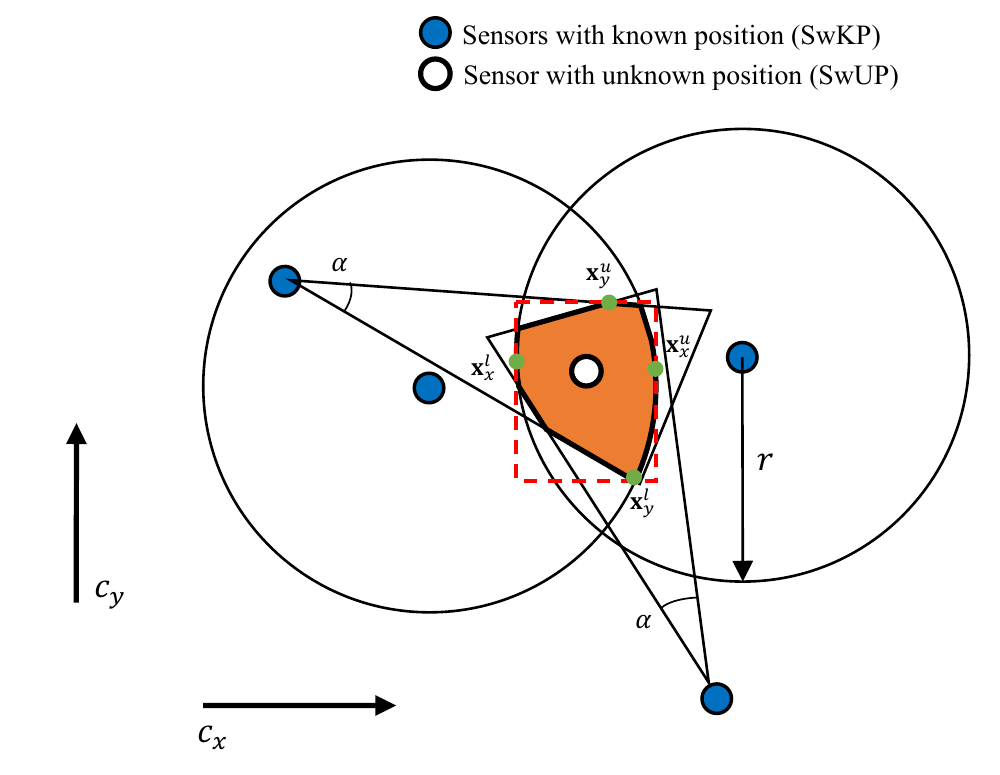}
\caption{Sensors with known position \MC{(SwKP: blue dots)} are computational nodes in the distributed
  optimization setup. Their objective is to estimate the position
    of the sensor with unknown position \MC{(SwUP: white dot)}. 
  The \MC{orange} (shaded) region represents the feasible set for the unknown
  position. 
}
\label{fig: sensors intersection}
\end{figure}

We further consider a heterogeneous setup in which some of the \MC{SwKPs}  are equipped with \MC{a} laser transceiver providing an
estimate of the relative angle with respect to the \MC{SwUP}, which is
within the range of the laser transceiver. This ``angular
constraint'' can be represented by the intersection of three half-spaces
$\FF^i_A\doteq\{\decision\in\mathbb{R}^2: a_k\decision-b_k\leq0,k=1,2,3\}$,
where $\decision\in\mathbb{R}^{2}$ is the position of \MC{SwUP} and
parameters $a_k\in\mathbb{R}^{1\times2}\text{ and }b_k\in\mathbb{R}$ define the
three half-spaces (two to bound the angle and one corresponding to the
transceiver range) as presented in Fig. \ref{fig: sensors intersection}.
\MC{SwKPs}, which are not equipped with laser transceiver, can
place a ``radial constraint'' for the \MC{SwUP} if \MC{it} is within the
communication range.  If the signal from the \MC{SwUP} can be sensed by a
\MC{SwKP}, a proximity constraint exists between them, that is, the \MC{SwUP} is placed in a circle with radius $r$ centered at the \MC{position of SwKP}
 with $r$ being the communication range. This constraint can be
formulated as $\|\decision-p_i\|_2\leq r$, where $p_j\in\mathbb{R}^{2}$ is the
position of $i$th \MC{SwKP}. Using the
Schur-complement~\cite{boyd2004convex}, the feasible set of radial constraint
can be represented as
\[
\FF^i_R\doteq\Bigg\{\decision\in\mathbb{R}^2:\left[\begin{array}{c c}
rI_2 & (\decision-p_i)\\
(\decision-p_i)^T & r
\end{array}\right]\succeq 0\Bigg\},
\]
where $I_2$ is the $2\times2$ identity matrix \MC{and $\succeq$ denotes positive semidefiniteness of the matrix }. 

The objective of the network\MC{ed system} is to ``estimate''---in a distributed fashion---the
position of the \MC{SwUP}. More precisely, \MC{SwKPs} try to find the
smallest box
$\{\decision\in\mathbb{R}^2:
[\decision_x^l,\decision_y^l]\leq\decision\leq[\decision_x^u,\decision_y^u]\}$ \MC{that is}
guaranteed to contain the \MC{SwUP}, i.e., the red dotted box in
Fig. \ref{fig: sensors intersection}.
As shown in Fig. \ref{fig: sensors intersection}, the position of the \MC{SwUP} is often sensed and hence constrained by several \MC{SwKP}. This is
used to shrink the feasibility set of \MC{the position of the SwUP}. We remark that in a
distributed setup, constraints regarding the position of the \MC{SwUP} are
distributed \MC{through} different computational nodes and there is no agent having full
knowledge of all constraints. The bounding box can be computed by solving four
optimization problems with linear objectives. For instance $\decision_x^l$ can
be obtained by solving the following optimization problem
\begin{align*}
\decision_x^l&=\text{argmin } ~\begin{bmatrix}
  1 \\
  0
\end{bmatrix}^T\decision\\
\text{subject to } ~\decision&\in\bigcap_{i=1}^n\bigg(\FF^i_A\cap\FF^i_R\bigg),
\end{align*}
where $n$ is the number of \MC{SwKPs} capable of constraining the position of
\MC{the} \MC{SwUP}.
\GN{We point out that for \MC{SwKPs} that are not equipped with a laser
  transceiver  $\FF^i_A=\mathbb{R}^2$ \MC{holds}.}

\MC{SwKPs} are capable of localizing themselves. Nevertheless, their
localization is not accurate and there is an uncertainty associated with the
position vector reported by each of these nodes. This uncertainty can be modeled as a
norm-bounded vector
\begin{equation}\label{eq:uncertainty radius}
\|p_i-\bar{p}_i\|_2\leq\rho,
\end{equation}
where $\bar{p}_i$ is the nominal position reported by sensor $i$, $p_i$ is its
actual position and $\rho$ is the radius of the uncertain set. In other words,
the uncertainty is defined by an $\ell_2$\MC{-}ball centered at the nominal point
$\bar{p}_i$ with radius $\rho$.

For simulation purposes, we consider a $10\times10$ square environment
containing $n$ \MC{SwKPs} (computational nodes) whose purpose is to estimate
the position of \MC{the} \MC{SwUP}. We remark that \MC{the} extension to the case in which
there is more than one \MC{SwUPs} is straightforward.  

Two \MC{SwKPs}
can establish a two-way communication link if their distance is less than $7$
units of distance. The communication range of \MC{SwUP} is also considered
to be $7$ units. Among the $n$ \MC{SwKPs}, half of them are able to place
angular constraints---using laser transceiver---on their relative angle to the
\MC{SwUP} within the laser range. The angle representing the accuracy of
the laser transceiver---the angle $\alpha$ in Fig.~\ref{fig: sensors
  intersection}---is $20$ degrees. Finally, the uncertainty radius $\rho$ in
\eqref{eq:uncertainty radius} is $0.1$ and the distribution of the uncertainty
is selected to be uniform. The probabilistic accuracy and confidence parameters
are selected to be $\varepsilon_i = 0.1/n$ and $\delta_i = 10^{-9}/n$ for all
$i=1,\ldots,n$, leading to $\varepsilon=0.1$ and $\delta = 10^{-9}$.

\begin{table}[t]
 \caption{The average---over all nodes---number of times a basis is transmitted to the neighbors,  average number of times verification is performed ($k_i$ at the convergence) and empirical violation of the computed solution ($\decision_\text{sol}$) over $10,000$ random samples for different number of known sensors.}
 \begin{center}
 \scalebox{0.9}{
 \begin{tabular}{c|c|c|c}
 \toprule
\# Nodes   &    \# Transmissions &  \GN{\# Verifications} & Empirical \tabularnewline
 $n$   &        (averaged)    & (averaged)    & violation \tabularnewline
 \midrule
 \midrule
 $10$   & $ 7.77$  & $8.77$  & $1\times 10^{-4}$   \tabularnewline
 \midrule
 $50$  &   $12.48$   & $7.36$ &$0$       \tabularnewline
 \midrule
$100$  &  $9.37$   & $6.52$ & $0$       \tabularnewline
\midrule
$200$  &  $7.95$   & $7.42$ & $0$       \tabularnewline
 \bottomrule
 \end{tabular}
 }
 \end{center}

 \label{tab: simulation results sensor network}
 \end{table}

 The distributed robust localization problem is solved for different values of
 the number of \MC{SwKPs} $n$. The result is reported in Table~\ref{tab:
   simulation results sensor network}. Similar to Tables \ref{tab: simulation
   results} and \ref{tab: simulation results Multiple violations}, we report the
 average---over all $n$ nodes---number of times nodes transmit their basis and
 run the verifications step. We also run an {\em a posteriori} analysis \MC{with} 
 Monte Carlo simulation\MC{s} using $10,000$ random samples to check the robustness of
 the computed solution. The empirical violation reported by the Monte Carlo
 simulation\MC{s} for each formulated scenario is reported in the last column of
 Table~\ref{tab: simulation results sensor network}.  We remark that the number
 of uncertain parameters is $2n$, thus rendering the problem nontrivial to
 solve. For instance, considering the last row of Table~\ref{tab: simulation
   results sensor network}, where $n=200$, there are $400$ uncertain parameters
 in the problem.

 To conclude, we would like to point out an interesting feature of
 Algorithm~\ref{alg: distributed algorithm}. In the results presented in
 Tables~\ref{tab: simulation results}, \ref{tab: simulation results Multiple
   violations}, and \ref{tab: simulation results sensor network}, one can
 observe that when the number of computational nodes increases \MC{while} keeping the graph
 diameter constant, the number of transmissions required for convergence does
 not change significantly. This suggests that the number of transmissions
 required for convergence is independent from the number of computational nodes
 but rather depends on the graph diameter.
   %

\section{Conclusions}\label{sec: conclusions}
In this paper, we proposed a randomized distributed algorithm for solving robust
mixed-integer convex programs in which  constraints  are scattered across a network
of processors communicating by means of a directed time-varying graph. The
distributed algorithm has a sequential nature consisting of two main steps:
verification and optimization. Each processor iteratively verifies a candidate
solution through a Monte Carlo \MC{simulations}, and solves a local MICP whose constraint
set includes its current basis, the collection of bases from neighbors and
possibly, a constraint---provided by the Monte Carlo algorithm---violating the
candidate solution. The two steps, i.e. verification and optimization, are
repeated \MC{until} a local stopping criterion is met and all nodes converge to a
common solution. We analyzed the convergence properties of the proposed
algorithm.

\appendix

\section{Proof of Theorem~\ref{thm: convergence }}\label{app: proof of theorem}
\textit{Proof of \MC{S}tatement~\ref{item:cost_convergence}.}
\GN{The proof of the first statement follows \MC{arguments very similar to those in}  \cite{burger2014polyhedral} and is \MC{thus} omitted.
}

\textit{Proof of \MC{S}tatement~\ref{item:stopping}.}
\GN{The proof of this statement heavily relies on results in
  \cite{hendrickx2008graphs}, but we report it here for completeness.}
Since the graph is uniformly jointly strongly connected, for any \MC{pair} of nodes
$u$ and $v$ and for any $t_0>0$, there exists a time-dependent path from $u$ to
$v$ \cite{hendrickx2008graphs}---a sequence of nodes $\ell_1,\ldots,\ell_k$  and a sequence of time
instances $t_1,\ldots,t_{k+1}$ with $t_0\leq t_1<\ldots<t_{k+1}$, such
that the directed edges
$\{(u,\ell_1),(\ell_1,\ell_2),\ldots,(\ell_k,v)\}$ \MC{belong} to the
directed graph at time instances $\{t_1,\ldots,t_{k+1}\}$, respectively. 
We now prove that the path from $u$ to $v$ is of length at most $nL$.
We recall that $n$ is the number of nodes and $L$ is defined in
Assumption~\ref{assum:graph}. 
To this end, following~\cite{hendrickx2008graphs}, define a time-varying set
$S_t$ and initialize it with the node $u$ itself, i.e. $S_{t_0} = {u}$. Next,
given $S_t$ construct $S_{t+1}$ by adding all nodes $\ell_{t+1}$ such that
$\ell_{t+1}\in\NN_\text{out}(\ell_t,t+1)$ with $\ell_{t}$ being all the nodes in
the set $S_t$. Now, notice that if $S_t$ does not contain all nodes, i.e.,
$S_t\neq \VV$ then after $L$ time instants at least one new node must be added
to the set $S_t$, otherwise taking the union graph over the $L$ time instants,
the set $S_t$ and that node would not be connected by any path, thus violating
Assumption~\ref{assum:graph}. Therefore, in at most $nL$ time instants $S_{t}$
will contain all nodes including $v$. Meaning that the length of time-varying
path connecting $u$ and $v$ is at most $nL$.
Consider nodes $i$ and $p$. If
$\ell_{1}\in\mathcal{N}_\text{out}(i,t_0)$, then
$J(\cSet{B}^i(t_0))\leq J(\cSet{B}^{\ell_1}(t_0+1))$ as the constraint set of node
$\ell_{1}$ at time $t_0+1$ is a superset of the constraint set of node $i$
at time $t_0$. Iterating this argument, we obtain
$J(\cSet{B}^i(t_0))\leq J(\cSet{B}^p(t_0+nL))$.
Again since the graph is uniformly jointly strongly connected, there will be a time varying path of length at most $nL$ from node $p$ to node $i$. Therefore,
\begin{equation*}
J(\cSet{B}^i(t_0))\leq J(\cSet{B}^p(t_0+nL))\leq J(\cSet{B}^i(t_0+2nL)).
\end{equation*}
If $J(\cSet{B}^i(t_0))=J(\cSet{B}^i(t_0+2nL))$ and considering the point that
node $p$ can be any node of the graph, then all nodes have the same cost. That
is, $J(\cSet{B}^1(t))=\ldots=J(\cSet{B}^n(t))$. This combined with
Assumption~\ref{assum: nondegeneracy} proves \MC{S}tatement~\ref{item:stopping}.

\textit{Proof of \MC{S}tatement~\ref{item:guarantee_xsol}.}
We start by defining the following violation sets:
  \begin{align*}
    \text{Viol}_i &:= \bigg\{q\in\mathbb{Q} :\; \mathbf{x}_{\text{sol}} \notin \mathcal{F}^i(q) \bigg\}, \qquad i=1,\dots,n,\\
    \text{Viol} &:= \bigg\{q\in\mathbb{Q} :\; \mathbf{x}_{\text{sol}} \notin \bigcap_{i=1}^n \mathcal{F}^i(q) \bigg\}.
  \end{align*}
  We observe that
  \begin{align*}
    \mathbf{x}_{\text{sol}} \notin \bigcap_{i=1}^n \mathcal{F}^i(q) \iff \exists i = 1,\dots,n :\; \mathbf{x}_{\text{sol}} \notin \mathcal{F}^i(q),
  \end{align*}
  hence
  \begin{equation*}
    \text{Viol} = \bigcup_{i=1}^n \text{Viol}_i,
  \end{equation*}
  therefore
  \begin{align*}
    \mathbb{P}\big\{\text{Viol}\big\} = \mathbb{P}\bigg\{\bigcup_{i=1}^n \text{Viol}_i \bigg\} \leq \sum_{i=1}^n \mathbb{P}\big\{\text{Viol}_i\big\}.
  \end{align*}
  At this point, we note that, applying simple properties coming from the probability theory, we have
  \begin{align*}
    \mathbb{P}_\infty\bigg\{\mathbf{q}&\in\mathbb{S}_{\text{sol}} :\; \mathbb{P}\bigg\{q\in\mathbb{Q} : \mathbf{x}_{\textup{\text{sol}}}\notin\bigcap_{i=1}^n\mathcal{F}^i(q)\bigg\} \leq \epsilon \bigg\}\\ 
    &= \mathbb{P}_\infty\bigg\{\mathbf{q}\in\mathbb{S}_{\text{sol}} :\; \mathbb{P}\big\{\text{Viol}\big\} \leq \epsilon \bigg\}\\
    &\geq \mathbb{P}_\infty\bigg\{\mathbf{q}\in\mathbb{S}_{\text{sol}} :\; \sum_{i=1}^n \mathbb{P}\big\{\text{Viol}_i\big\} \leq \epsilon\bigg\}\\
    &= \mathbb{P}_\infty\bigg\{\mathbf{q}\in\mathbb{S}_{\text{sol}} :\; \sum_{i=1}^n \mathbb{P}\big\{\text{Viol}_i\big\} \leq \sum_{i=1}^n \epsilon_i\bigg\}\\
    &\geq \mathbb{P}_\infty\bigg\{\bigcap_{i=1}^n \bigg\{\mathbf{q}\in\mathbb{S}_{\text{sol}} :\; \mathbb{P}\big\{\text{Viol}_i\big\} \leq \epsilon_i\bigg\}\bigg\}\\
    &\geq 1 - \sum_{i=1}^n \mathbb{P}_\infty\bigg\{\mathbf{q}\in\mathbb{S}_{\text{sol}} :\; \mathbb{P}\big\{\text{Viol}_i\big\} > \epsilon_i\bigg\}.
  \end{align*}
  As a consequence, if we prove that
  \begin{align}\label{eq:final}
    \mathbb{P}_\infty\big\{\mathbf{q} \in \mathbb{S}_{\text{sol}} :\; \mathbb{P}\big\{\text{Viol}_i\big\} > \epsilon_i \big\} \leq \delta_i,\qquad i=1,\dots,n,
  \end{align}
  then the thesis will follow. 

  Now, let $i=1,\dots,n$ and $k\MC{_i}\in\mathbb{N}$. We define the events:
  \begin{align*}
    &\text{ExitBad}_i := \big\{\mathbf{q} \in \mathbb{S}_{\text{sol}} :\; \mathbb{P}\big\{\text{Viol}_i\big\} > \epsilon_i \big\},\\
    &\text{Feas}_{k_i}^i := \big\{ \mathbf{q}\in\mathbb{S}_{\text{sol}} :\; \mathbf{x}_{\text{sol}}\in\mathcal{F}^i(q_{k_i,i}^{(\ell)}),\;\;\forall \ell=1,\dots,M_{k_i}^i \big\}.
  \end{align*}
  We immediately note that \eqref{eq:final} is equivalent to:
  \begin{align}\label{eq:final1}
    \mathbb{P}_{\infty}\big\{\text{ExitBad}_i\big\} \leq \delta_i,\qquad i=1,\dots,n.
  \end{align}
  Thus we will prove \eqref{eq:final1} instead of \eqref{eq:final}.

  First, we observe that if $\mathbf{q}\in\text{ExitBad}_i$, then \MC{there} exists $k_i$ such that $\mathbf{q}\in\text{Feas}_{k_i}^i$. Consequently, the following relation holds:
  \begin{align*}
    \text{ExitBad}_i &\subset \bigcup_{k_i=1}^\infty \big( \text{Feas}_{k_i}^i\cap \text{ExitBad}_i \big).
  \end{align*}
  Therefore, we have 
  \begin{align}\label{eq:series}
    \mathbb{P}_{\infty}\big\{\text{ExitBad}_i\big\} \leq \sum_{k_i=1}^\infty \mathbb{P}_{\infty}\big\{\text{Feas}_{k_i}^i \cap \text{ExitBad}_i \big\},
  \end{align}
  and we can bound the terms of the last series as follows:
  \begin{small}
    \begin{align}
      \mathbb{P}_{\infty}\big\{& \text{Feas}_{k_i}^i\cap \text{ExitBad}_i \big\}\nonumber\\
      &= \mathbb{P}_{\infty}\big\{ \text{Feas}_{k_i}^i \big| \text{ExitBad}_i \big\} \mathbb{P}_{\infty}\big\{ \text{ExitBad}_i \big\}\label{eq:iid2}\\
      &\leq \mathbb{P}_{\infty}\big\{ \text{Feas}_{k_i}^i \big| \text{ExitBad}_i \big\}\label{eq:iid3}\\
      &= \mathbb{P}_{\infty}\bigg\{ \mathbf{q}\in\mathbb{S}_{\text{sol}} :\; \mathbf{x}_{\text{sol}}\in\mathcal{F}^i(q_{k_i,i}^{(\ell)}),\;\;\forall \ell=1,\dots,M_{k_i}^i\;\;\nonumber\\
      &\hspace{2.5cm}\bigg|\; \mathbb{P}\big\{ q\in\mathbb{Q} : \mathbf{x}_{\text{sol}} \notin \mathcal{F}^i(q) \big\} > \epsilon_i \bigg\}\label{eq:iid4}\\
      &= \mathbb{P}_{\infty}\bigg\{ \mathbf{q}\in\mathbb{S}_{\text{sol}} :\; \mathbf{x}_{\text{sol}}\in\mathcal{F}^i(q_{k_i,i}^{(\ell)}),\;\;\forall \ell=1,\dots,M_{k_i}^i\;\;\nonumber\\
      &\hspace{2.5cm} \bigg|\; \mathbb{P}\big\{ q\in\mathbb{Q} : \mathbf{x}_{\text{sol}} \in \mathcal{F}^i(q) \big\} \leq 1 - \epsilon_i \bigg\}\label{eq:iid5}\\
      &\leq (1 - \epsilon_i)^{M^i_k}.\label{eq:iid8}\vspace{-2pt}
    \end{align}
  \end{small}
\hspace{-5pt} A few comments are given in order to explain the above \MC{set of equalities and inequalities}: equation \eqref{eq:iid2} follows from the chain rule; inequality \eqref{eq:iid3} is due to the fact that $\mathbb{P}_{\infty} \leq 1$; in equation \eqref{eq:iid4} we have written explicitly the events $\text{Feas}_{k_i}^i$ and $\text{ExitBad}_{i}$; equation \eqref{eq:iid5} exploits the property $\mathbb{P}\big\{\mathbb{Q}-A\big\} = 1 - \mathbb{P}\big\{ A \big\}$; inequality \eqref{eq:iid8} is due to the fact that in order to declare $\mathbf{x}_{\text{sol}}$
  feasible, the algorithm needs to perform $M^i_{k_i}$
  independent successful trials each having \MC{a} probability of success smaller than or equal to $(1-\epsilon_i)$.

  At this point, \MC{combining} \eqref{eq:iid8} \MC{and} \eqref{eq:series}, we obtain
  \begin{align*}
    \mathbb{P}_{\infty}\big\{ \text{ExitBad}_i \big\} \leq \sum_{k=1}^\infty (1 - \epsilon_i)^{M^i_{k_i}},
  \end{align*}
    and the last series can be made arbitrarily small by an appropriate choice of $M^i_{k_i}$. In particular, by choosing
  \begin{align}\label{eq:zeta}
    (1 - \epsilon_i)^{M^i_{k_i}} = \frac{1}{k_i^\alpha}\frac{1}{\xi(\alpha)}\delta_i,
  \end{align}
  where $\xi(\alpha) := \sum_{s=1}^{\infty} \frac{1}{s^\alpha}$ is the Riemann Zeta function evaluated at $\alpha > 1$, we have
  \begin{align*}
    \sum_{k_i=1}^{\infty} (1 - \epsilon_i)^{M^i_{k_i}} = \sum_{k_i=1}^{\infty} \frac{1}{k_i^\alpha}\frac{1}{\xi(\alpha)}\delta_i = \frac{1}{\xi(\alpha)}\delta_i \sum_{k_i=1}^{\infty} \frac{1}{k_i^\alpha} = \delta_i.
  \end{align*}
  Hence, the choice of sample size is obtained by solving \eqref{eq:zeta} for $M^i_{k_i}$:
  \begin{align*}
    M^i_{k_i} = \bigg\lceil\frac{\alpha\log(k_i) + \log(\xi(\alpha)) + \log\big(\frac{1}{\delta_i}\big)}{\log\big(\frac{1}{1-\epsilon_i}\big)}\bigg\rceil.
  \end{align*}
  Note that the optimal value of $\alpha$ minimizing $M^i_{k_i}$ is computed by trial and error to be $\alpha=1.1$; by means of this choice of $\alpha$, we obtain the number $M^i_{k_i}$ specified in the algorithm, and we can thus conclude that \eqref{eq:final1} is true, and so the thesis follows.

\textit{Proof of \MC{S}tatement~\ref{item:guarantee_basis}.}
Define the following two sets
\begin{align*}
V \doteq &\bigg\{q\in\mathbb{Q}:\decision_\text{sol}\notin \bigcap_{i=1}^n\FF^i(q)\bigg\}\\
W \doteq &\bigg\{q\in\mathbb{Q}:J\big(\cSet{B}_\text{sol}\cap \cSet{F}(q)\big)>J(\cSet{B}_\text{sol})\bigg\},
\end{align*}
where $\cSet{F}(q)\doteq \bigcap_{i=1}^n \cSet{F}^i(q)$. We first prove that $V=W$. 

Let $q_v\in V$, that is, $\decision_\text{sol}\notin\bigcap_{i=1}^n\FF^i(q_v)$, then $J(\cSet{B}_\text{sol}\cap \cSet{F}(q_v))\geq J(\cSet{B}_\text{sol})$ with $\cSet{F}(q_v)\doteq\bigcap_{i=1}^n\cSet{F}^i(q_v)$. Also, due to Assumption~\ref{assum: nondegeneracy},  any subproblem of~\eqref{eq:RMICP} has a unique minimum point and hence, $J(\cSet{B}_\text{sol}\cap \cSet{F}(q_v))\neq J(\cSet{B}_\text{sol})$. Hence, $q_v\in W$ and subsequently $V\subseteq W$.

Now let $q_v\in W$ that is $J\big(\cSet{B}_\text{sol}\cap \cSet{F}(q_v)\big)>J(\cSet{B}_\text{sol})$ and suppose by contradiction that $\decision_\text{sol}\in \FF(q_v)$, i.e. $q_v\notin V$. Considering the \MC{fact} that $\cSet{B}_\text{sol}$ is the basis corresponding to $\decision_{\text{sol}}$, we conclude that $\decision_{\text{sol}}\in\mathcal{B}_\text{sol}\cap \mathcal{F}(q_v)$ implying that $J(\cSet{B}_\text{sol}\cap \cSet{F}(q_v))\leq J(\cSet{B}_\text{sol})$.  This contradicts the fact that $J\big(\cSet{B}_\text{sol}\cap \cSet{F}(q_v)\big)>J(\cSet{B}_\text{sol})$. Hence, $q_v\in V$ and subsequently, $W\subseteq V$. Since $V\subseteq W$ and $W\subseteq V$, therefore $V=W$.

This argument combined with the result of \MC{Statement~\ref{item:guarantee_basis}} proves the fifth statement of the theorem. That is, the probability that the solution $\decision_\text{sol}$  is no longer optimal for a new sample  equals the probability that the solution is violated by the new sample.  
%

\bibliographystyle{plain}

\end{document}